\theoremstyle{plain}
\newtheorem{theorem}{Theorem}[section]
\theoremstyle{plain}
\newtheorem{corollary}[theorem]{Corollary}
\theoremstyle{plain}
\newtheorem{lemma}[theorem]{Lemma}
\theoremstyle{plain}
\newtheorem{proposition}[theorem]{Proposition}
\theoremstyle{definition}
\newtheorem{definition}[theorem]{Definition}
\theoremstyle{remark}
\newtheorem*{remark}{Remark}
\theoremstyle{remark}
\theoremstyle{definition}
\newtheorem{example}{Example}[section]
\providecommand{\norm}[1]{\lVert#1\rVert}
\newcommand{\R}{\mathbb{R}}
\newcommand{\C}{\mathbb{C}}
\newcommand{\Rd}{\mathbb{R}^d}
\newcommand{\Z}{\mathbb{Z}}
\newcommand{\Lt}[1][d]{L^2(\R^{#1})}
\newcommand{\G}{\mathcal{G}}
\newcommand{\F}{\mathcal{F}}
\newcommand{\B}{\mathcal{B}}
\renewcommand{\l}{\lambda}
\renewcommand{\L}{\Lambda}
\newcommand{\J}{
	\left(
		\begin{array}{cc}
		0 & I\\
		-I & 0
		\end{array}
	\right)
}
\renewcommand{\S}{
	\left(
		\begin{array}{cc}
		A & B\\
		C & D
		\end{array}
	\right)
}
\newcommand{\iS}{
	\left(
		\begin{array}{cc}
		D^T & -B^T\\
		-C^T & A^T
		\end{array}
	\right)
}
\newcommand{\VP}{
	\left(
		\begin{array}{cc}
		I & 0\\
		-P & I
		\end{array}
	\right)
}
\newcommand{\ML}{
	\left(
		\begin{array}{cc}
		L^{-1} & 0\\
		0 & L^T
		\end{array}
	\right)
}
\newcommand{\SW}{
	\left(
		\begin{array}{cc}
		L^{-1}Q & L^{-1}\\
		PL^{-1}Q-L^T & PL^{-1}
		\end{array}
	\right)
}
\newcommand{\RotMat}[1][\tau]{
	\left(
		\begin{array}{rr}
		\cos #1 & \sin #1\\
		-\sin #1 & \cos #1
		\end{array}
	\right)
}
\newcommandtwoopt{\HypMat}[2][\cosh(\tau)][\sinh(\tau)]{
	\left(
		\begin{array}{cc}
		#1 & #2\\
		#2 & #1
		\end{array}
	\right)
}
\newcommandtwoopt{\Vector}[2][x][\omega]{
	\left(
		\begin{array}{cc}
		#1 & #2\\
		\end{array}
	\right)
}
\newcommandtwoopt{\VectorT}[2][x][\omega]{
	\left(
		\begin{array}{cc}
		#1 \\
		#2
		\end{array}
	\right)
}
\newcommand{\modFT}{\widehat{J}}
\newcommand{\chirp}[1][-P]{\widehat{V_{#1}}}
\newcommand{\rescaleop}[1][L]{\widehat{M_{#1}}}
\newcommand{\metaop}[1][W]{\widehat{S_{#1}}}
\newcommandtwoopt{\go}[2][t^2][1]{2^{#2/4} e^{-\pi #1}}
\title[Gabor Frame Sets of Invariance]{Gabor Frame Sets of Invariance - A Hamiltonian Approach to Gabor Frame Deformations}
\author[M.\ Faulhuber]{Markus Faulhuber}
\address{Markus Faulhuber: NuHAG, Faculty of Mathematics, University of Vienna, Oskar-Morgenstern-Platz 1, 1090 Vienna, Austria}
\email{markus.faulhuber@univie.ac.at}
\thanks{Austrian Science Fund (FWF): [P26273-N25]}
\date{}
\numberwithin{equation}{section}
\begin{document}

\begin{abstract}
	In this work we study families of pairs of window functions and lattices which lead to Gabor frames which all possess the same frame bounds. To be more precise, for every generalized Gaussian $g$, we will construct an uncountable family of lattices $\lbrace \Lambda_\tau \rbrace$ such that each pairing of $g$ with some $\Lambda_\tau$ yields a Gabor frame, and all pairings yield the same frame bounds. On the other hand, for each lattice we will find a countable family of generalized Gaussians $\lbrace g_i \rbrace$ such that each pairing leaves the frame bounds invariant. Therefore, we are tempted to speak about \textit{Gabor Frame Sets of Invariance}.
\end{abstract}

\subjclass[2010]{42C15, 70H05}
\keywords{Frame Bounds, Gabor Frame, Hamiltonian Deformation}

\maketitle

\section{Introduction and Notation}

A \textit{Gabor frame} (or \textit{Weyl-Heisenberg frame}) for $\Lt$ is generated by a (fixed, non-zero) \textit{window function} $g \in \Lt$ and an index set $\L \subset \R^{2d}$. It is denoted by $\mathcal{G}(g,\L)$ and consists of \textit{time-frequency shifted} versions of $g$.

We denote by $\l = (x, \omega) \in \Rd \times \Rd$ a point in the \textit{time-frequency plane} and use the following notation for a \textit{time-frequency shift} by $\l$:
\begin{equation}\label{eq_TFShift}
	\pi(\l)g(t) = M_\omega T_x \, g(t) = e^{2 \pi i \omega \cdot t} g(t-x), \quad x,\omega,t \in \Rd.
\end{equation}
The operators involved in Equation \eqref{eq_TFShift} are the \textit{translation operator}
\begin{equation}
	T_x g(t) = g(t-x)
\end{equation}
and the \textit{modulation operator}
\begin{equation}
	M_\omega g(t) = e^{2 \pi i \omega \cdot t}.
\end{equation}
The latter one shifts a function in the Fourier or frequency space, hence the name time-frequency shift for the composition of the mentioned operators.
\begin{remark}
	The point `` $\cdot$ " denotes the Euclidean inner product of two column vectors, i.e.\ $\omega \cdot t = \langle \omega , t \rangle = \omega^T t$ (e.g.\ in Equation \eqref{eq_TFShift}). Also, we will use the notation $x^2 = x \cdot x$, $Sx \cdot y = x^T S^T y$ and $S x^2 = x^T S^T x$ for $x,y \in \Rd$ and $S$ a $d \times d$ matrix.
\end{remark}
\begin{remark}
	We note that the translation and modulation operator do not commute in general, in fact
	\begin{equation}
		M_\omega T_x = e^{2 \pi i \omega \cdot x} T_x M_\omega.
	\end{equation}
	This formula is closely related to the commutation relations in quantum mechanics.
\end{remark}

Let $\L$ be an index set. The time-frequency shifted versions of the window $g$ with respect to $\L$ are called \textit{atoms} and the set
\begin{equation}
	\G(g,\L) = \lbrace \pi(\lambda)g \, | \, \lambda \in \Lambda \rbrace
\end{equation}
is called a \textit{Gabor system}. $\mathcal{G}(g,\L)$ is called a frame if it fulfills the \textit{frame property}
\begin{equation}\label{Frame}
	A \norm{f}_2^2 \leq \sum_{\l \in \L} \left| \langle f, \pi(\l) g \rangle \right|^2 \leq B \norm{f}_2^2, \quad \forall f \in \Lt
\end{equation}
for some positive constants $0< A \leq B < \infty$ called \textit{frame constants} or \textit{frame bounds}. The index set $\L \subset \R^{2d}$ is called a \textit{lattice} in the time-frequency plane if and only if there exists an invertible (non-unique) $2d \times 2d$ matrix $S$, in the sense that $\L = S \Z^{2d}$. The volume of the lattice is defined as
\begin{equation}\label{Volume}
	vol(\L) = |\det(S)|
\end{equation}
and the \textit{density} of the lattice is given by
\begin{equation}
	\delta(\Lambda) = \frac{1}{vol(\Lambda)}.
\end{equation}

For more details on frames, Gabor frames and time-frequency analysis we refer to \cite{Chr03}, \cite{FeiGro97}, \cite{FeiLue12}, \cite{Gro01}, \cite{Hei06}.

\begin{remark}
	The reader familiar with the topic of time-frequency analysis will note that we restricted ourselves to the Hilbert space case. The spaces usually involved when it comes to time-frequency analysis are the \textit{modulation spaces}
	\begin{equation}
		M^p(\R^d) = \lbrace f \in \mathcal{S}'(\Rd) \, | \, \norm{f}_{M^p}^p < \infty \rbrace,
	\end{equation}
	where
	\begin{equation}
		\norm{f}_{M^p}^p = \int_{\Rd} \int_{\Rd} | \langle f,M_\omega T_x g \rangle |^p \, dx \, d\omega,
	\end{equation}
	with (non-zero) fixed $g \in \mathcal{S}(\Rd)$ (see e.g\ \cite{Gro01}, \cite{Gro14}). We note that $M^p(\Rd)$ is independent of the choice of $g \in \mathcal{S}(\Rd)$ and furthermore, $M^2(\Rd) = \Lt$. In time-frequency analysis the spaces $M^1(\R^d)$ and its dual space $M^\infty(\R^d)$ replace in a natural way the Schwartz space $\mathcal{S}(\R^d)$ and its dual space, the space of tempered distributions $\mathcal{S}'(\R^d)$ usually used in the field of analysis. In time-frequency analysis the test or window functions are often assumed to be in \textit{Feichtinger's Algebra} $\mathcal{S}_0(\R^d) = M^1(\R^d)$, whose dual space $\mathcal{S}'_0(\R^d) = M^\infty(\R^d)$ is the natural space of distributions in time-frequency analysis. For further reading on the topic of modulation spaces we refer to \cite{Fei81}, \cite{Gro01}, \cite{Gro14}.
\end{remark}

From Equation \eqref{Frame} we see that the frame bounds $A$ and $B$ depend on the window $g$ and the lattice $\Lambda$. Fixing the density of the lattice, a question arising is how far one can deform the window or the lattice or both without destroying the frame property. Results in this direction are usually called perturbation or deformation results and some are given in \cite{Gos14}, \cite{FeiKai04}, \cite{Gro14}, \cite{GroOrtRom13}. Usually, results concerning deformations of Gabor frames do not describe the behavior of the frame bounds explicitly, but rather state whether the frame property is kept at all or not. We will present some perturbation results where not only the frame property is kept, but also the frame bounds.

Although many of the definitions and well-known, general theorems are stated for $\Lt$, we will state our results only for $d=1$.

This work is structured as follows. In Section \ref{SympGroup} we recall the basic properties of the symplectic group, in Section \ref{MetaGroup} we recall the corresponding properties of the metaplectic group as well as the interplay between the two mentioned groups. Finally, in Section \ref{Sets_of_Invariance} we will state and proof the main theorem of this work and present some examples.

\section{The Symplectic Group}\label{SympGroup}

As we want to describe a lattice by a matrix one possible way of describing the deformation process is by multiplying the generating matrix with another matrix from the left. In particular, all our deformations will by carried out by symplectic matrices. Hence, in this section we want to recall some basic facts about the symplectic group and its elements, the symplectic matrices. They are widely used in Hamiltonian mechanics and also serve as tools in time-frequency analysis \cite{Gos11}, \cite{Fol89}, \cite{Gro01}. All results of this section can be found in full generality in de Gosson's book \cite{Gos11}. We will also list further references at many points. Although we will only treat the 1-dimensional case, most results can be formulated verbatim for $d>1$ which is why we formulate them for the latter case.

\subsection{Symplectic Matrices}
\begin{definition}\label{definition_symplectic_matrix}
	A matrix $S \in GL(2d,\R)$ is called \textit{symplectic} if and only if
	\begin{equation}\label{eq_symplectic_matrix}
		S J S^T = S^T J S = J,
	\end{equation}
	where $J = \J$, $0$ is the $d \times d$ zero matrix and $I$ is the $d \times d$ identity matrix. $J$ is called the \textit{standard symplectic matrix}.
\end{definition}

\begin{remark}
	We note that condition \eqref{eq_symplectic_matrix} is redundant. Actually, we have
	\begin{equation}
		S J S^T = J \Longleftrightarrow S^T J S = J.
	\end{equation}
	From \eqref{eq_symplectic_matrix} we conclude that all symplectic matrices $S \in Sp(2d,\R)$ must have determinant equal to $\pm 1$. In fact, if $S \in Sp(2d,\R)$ then $det(S) = 1$, see \cite{Gos11}, \cite{Gos13}, \cite{Gro01}. Also, $Sp(2d,\R)$ is a subgroup of $SL(2d,\R)$ and in the case $d = 1$ we have $Sp(2,\R) = SL(2,\R)$. In all other cases where $d > 1$, $Sp(2d,\R)$ is a proper subgroup of $SL(2d,\R)$.
\end{remark}

\begin{lemma}
	The set of all symplectic matrices forms a group denoted by $Sp(2d,\R)$.
\end{lemma}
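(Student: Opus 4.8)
The plan is to apply the standard subgroup criterion, since the collection of symplectic matrices sits inside the group $GL(2d,\R)$ under matrix multiplication, where associativity is automatic. It therefore suffices to check three things: that the identity is symplectic, that the product of two symplectic matrices is again symplectic, and that the inverse of a symplectic matrix is symplectic. Throughout I would exploit the redundancy noted in the preceding remark, namely that $S J S^T = J$ is equivalent to $S^T J S = J$, so that only the single relation $S J S^T = J$ needs to be verified in each case.

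First, the identity $I \in GL(2d,\R)$ clearly satisfies $I J I^T = J$, so $I \in Sp(2d,\R)$ and the set is nonempty. Next, for closure I would take two symplectic matrices $S$ and $S'$ and compute directly, using associativity together with $(S S')^T = (S')^T S^T$ and the defining relations for each factor, that
\begin{equation*}
	(S S') J (S S')^T = S \big( S' J (S')^T \big) S^T = S J S^T = J,
\end{equation*}
so that $S S'$ is again symplectic.

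The step requiring the most care is closure under inversion; this is really the only place where anything is done rather than merely substituted. Here I would first note that invertibility is already guaranteed by the membership $S \in GL(2d,\R)$ (and could in any case be read off from $\det(S)^2 \det(J) = \det(J)$ with $\det(J) = 1$, forcing $\det(S) = \pm 1 \neq 0$). Starting from $S J S^T = J$, I would multiply both sides on the left by $S^{-1}$ and on the right by $(S^{-1})^T = (S^T)^{-1}$, and simplify the resulting products $S^{-1} S = I$ and $S^T (S^{-1})^T = I$, which yields
\begin{equation*}
	S^{-1} J (S^{-1})^T = J,
\end{equation*}
exactly the statement that $S^{-1}$ is symplectic. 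Having verified nonemptiness, closure under products, and closure under inverses, the subgroup criterion gives that $Sp(2d,\R)$ is a group, completing the argument. There is no genuine obstacle beyond keeping the transposes and the order of multiplication straight in the inversion step.
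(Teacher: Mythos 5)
Your proposal is correct and follows essentially the same route as the paper: a direct verification of closure under products and inverses from the defining relation $S J S^T = J$, with the subgroup criterion inside $GL(2d,\R)$ making associativity and the identity automatic. The only cosmetic difference is in the inversion step, where you multiply the relation by $S^{-1}$ and $(S^{-1})^T$, while the paper instead inverts the double equality $S J S^T = S^T J S = J$ and uses $J^{-1} = -J$; both manipulations are equivalent and equally valid.
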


\begin{proof}
	Let $S_1, \, S_2 \in Sp(2d,\R)$. It follows from Equation \eqref{eq_symplectic_matrix} that the product $S_1 S_2 \in Sp(2d,\R)$. Taking the inverse of the double equality in \eqref{eq_symplectic_matrix} and using the fact that $J^{-1} = -J$ we see that $S^{-1} \in Sp(2d,\R)$ if $S \in Sp(2d,\R)$.
\end{proof}

It is convenient and commonly used write symplectic matrices as block matrices in the following form
\begin{equation}
	S = \S,
\end{equation}
where $A,B,C,D$ are $d \times d$ matrices. With this notation we have the following formula for the inverse of a symplectic matrix
\begin{equation}\label{eq_inverse_symplectic_matrix}
	S^{-1} = \iS.
\end{equation}
In the case $d = 1$ this reduces to the well-known inversion formula for a matrix $S$ belonging to $SL(2,\R)$, as $A,B,C,D \in \R$ are scalars.

\subsection{Free Symplectic Matrices}

We will now introduce the building blocks of the symplectic group, the free symplectic matrices and state that any symplectic matrix is the product of these building blocks \cite{Gos11}, \cite{GosLue14}.

\begin{definition}\label{definition_free_symplectic_matrix}
	We call a symplectic matrix $S = \S \in Sp(2d,\R)$ a \textit{free symplectic matrix} if $B$ is invertible.
\end{definition}

\begin{definition}\label{definition_generator_matrix}
	Let $P$ be a symmetric $d \times d$ matrix and let $L$ be an invertible $d \times d$ matrix. We define the following $2d \times 2d$ matrices
	\begin{align}
		J & = \J\\
		V_P & = \VP \label{eq_lower_tri_matrix}\\
		M_L & = \ML \label{eq_dilation_matrix}
	\end{align}
	which all belong to $Sp(2d,\R)$. We call them \textit{generator matrices} for the free symplectic matrices.
\end{definition}

The name generator matrix is justified by the following propositions.

\begin{proposition}\label{proposition_symplectic_factorization}
	With the notation of Definition \ref{definition_generator_matrix} we get that any free symplectic matrix $S = \S$ can be factored as
	\begin{equation}\label{eq_factor_symplectic_matrix_1}
		S = V_{-DB^{-1}}M_{B^{-1}}JV_{-B^{-1}A}.
	\end{equation}
\end{proposition}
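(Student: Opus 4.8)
The plan is to verify the factorization by direct computation: I would multiply out the right-hand side of \eqref{eq_factor_symplectic_matrix_1} as a product of block matrices and compare the result with $S$ entry by entry. Since $S$ is a free symplectic matrix, $B$ is invertible by Definition \ref{definition_free_symplectic_matrix}, so every matrix on the right is well-defined: $V_{-B^{-1}A}$ and $V_{-DB^{-1}}$ are lower-triangular with unit diagonal blocks, and $M_{B^{-1}} = \left(\begin{smallmatrix} B & 0 \\ 0 & (B^T)^{-1} \end{smallmatrix}\right)$ because $(B^{-1})^{-1} = B$ and $(B^{-1})^T = (B^T)^{-1}$.

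First I would compute from the right. Multiplying $J$ by $V_{-B^{-1}A}$ gives $\left(\begin{smallmatrix} B^{-1}A & I \\ -I & 0 \end{smallmatrix}\right)$, and applying $M_{B^{-1}}$ on the left then yields $\left(\begin{smallmatrix} A & B \\ -(B^T)^{-1} & 0 \end{smallmatrix}\right)$. Finally, left-multiplying by $V_{-DB^{-1}}$ produces
\[
	\begin{pmatrix} A & B \\ DB^{-1}A - (B^T)^{-1} & D \end{pmatrix}.
\]
Three of the four blocks --- the upper-left, upper-right, and lower-right entries --- already coincide with those of $S$, so the only remaining point is to check that the lower-left block equals $C$.

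The crux of the argument, and the only place where the symplectic hypothesis enters, is the identity $C = DB^{-1}A - (B^T)^{-1}$. I would establish it by multiplying on the right by the invertible factor $B^T$ and invoking the block relations coming from $S J S^T = J$. Writing that equation out in block form gives in particular that $AB^T$ is symmetric, i.e.\ $AB^T = BA^T$, together with $DA^T - CB^T = I$. From the symmetry relation one obtains $B^{-1}AB^T = B^{-1}(BA^T) = A^T$, hence $DB^{-1}AB^T = DA^T$. Combining this with $DA^T - CB^T = I$ yields
\[
	\left(DB^{-1}A - (B^T)^{-1}\right)B^T = DB^{-1}AB^T - I = DA^T - I = CB^T,
\]
and cancelling $B^T$ gives the desired equality, completing the comparison.

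I expect the main (and essentially only) obstacle to be bookkeeping rather than any genuine difficulty: the proposition is a computation, and the substantive observation is that the naive block multiplication delivers $DB^{-1}A - (B^T)^{-1}$ in the lower-left slot rather than $C$ outright. Recognising that closing this gap is precisely the role of the symplectic relations --- and identifying the correct pair, namely $AB^T = BA^T$ and $DA^T - CB^T = I$ --- is the one place where care is needed; everything else is routine matrix arithmetic.
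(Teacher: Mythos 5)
Your computation is correct and complete: the block products come out as you state, the identity $C = DB^{-1}A - (B^T)^{-1}$ is indeed the one place where the symplectic hypothesis must enter, and your derivation of it from $AB^T = BA^T$ and $DA^T - CB^T = I$ (both of which are block relations of $S J S^T = J$) is valid, with the cancellation of $B^T$ justified by invertibility of $B$. For comparison: the paper itself gives no proof but defers to \cite{Gos11} and \cite{Fol89}, where the factorization is obtained through the generating-function formalism rather than brute-force multiplication --- one first shows that $(x,\omega) = S(x',\omega')$ is equivalent to $\omega = \partial_x W(x,x')$, $\omega' = -\partial_{x'}W(x,x')$ for the quadratic form $W$ of \eqref{eq_quad_form_ABCD}, and then reads off \eqref{eq_factor_symplectic_matrix_1} from the three summands of $W$, each corresponding to one factor (chirp, dilation/Fourier, chirp). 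Your direct verification is more elementary and fully self-contained; what the generating-function route buys is precisely the conceptual link the paper exploits afterwards, namely that each factor of \eqref{eq_factor_symplectic_matrix_1} matches one term of $W$ in Proposition \ref{proposition_quad_form_symplectic_matrix_equivalence} and lifts to the corresponding factor of the quadratic Fourier transform \eqref{eq_quadratic_FT}.

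One small point you should add for completeness: Definition \ref{definition_generator_matrix} requires the matrix $P$ in $V_P$ to be symmetric, so to present $V_{-DB^{-1}}$ and $V_{-B^{-1}A}$ as generator matrices you should record that $DB^{-1}$ and $B^{-1}A$ are symmetric (as asserted in Proposition \ref{proposition_quad_form_symplectic_matrix_equivalence}). This follows from the block relations you did not use: $S^T J S = J$ yields $B^T D = D^T B$, hence $DB^{-1} = (B^T)^{-1}D^T$ is symmetric, while your relation $AB^T = BA^T$ gives $B^{-1}A = A^T (B^T)^{-1}$, so $B^{-1}A$ is symmetric as well. The factorization identity itself does not need this, so your proof stands; the symmetry is needed only so that the right-hand side of \eqref{eq_factor_symplectic_matrix_1} consists of generator matrices in the sense of the definition.
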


A proof is given in \cite{Gos11} or \cite{Fol89}. It makes use of well-known factorization results and properties of symplectic matrices.

\subsection{Generating Functions}

Following \cite{Gos11} we will point out connections between quadratic forms in $(x,x')$ and free symplectic matrices. The motivation comes from \textit{Hamiltonian mechanics}. We want to describe the motion of a particle depending on two variables usually called position ($x$) and momentum ($p$) which depend on time ($t$) and are coupled by \textit{Hamilton's equations}
\begin{equation}
	\begin{aligned}
		& \dot{x}(t) = \frac{\partial}{\partial p}H(x(t),p(t))\\
		& \dot{p}(t) = -\frac{\partial}{\partial x}H(x(t),p(t)).
	\end{aligned}
\end{equation}
Here, $H(x(t),p(t))$ is the \textit{Hamiltonian} or \textit{Hamilton function}. For more details on Hamiltonian mechanics see \cite{Arn89}.

Given two different positions $x$ and $x'$ of a particle we want to know the initial and final momentum $p$ and $p'$ assuming that the motion is linear, meaning we have the linear system $(x,p) = S(x',p')$. This is equivalent to
\begin{equation}
	\begin{aligned}
		& x = A x' + B p'\\
		& p = C x' + D p'.
	\end{aligned}
\end{equation}
In order to solve this system of equations for $(p,p')$, clearly $B$ has to be invertible.

In the case of time-frequency analysis the proper way to use and interpret Hamiltonian mechanics is by replacing position by time and momentum by frequency. The following proposition is again formulated in the context of time-frequency analysis.
\begin{proposition}\label{proposition_quad_form_symplectic_matrix_equivalence}
	Let $S = \S \in Sp(2d,\R)$ be a free symplectic matrix. Let $P,Q$ be $d \times d$ symmetric matrices and let $L$ be a $d \times d$ invertible matrix.
	\begin{enumerate}[(i)]
		\item Then we have
		\begin{equation}\label{eq_quad_form_symplectic_matrix}
			(x,\omega) = S(x',\omega') \Longleftrightarrow
			\begin{cases}
				\omega = \partial_x W(x,x'),\\
				\omega' = -\partial_{x'} W(x,x')
			\end{cases}
		\end{equation}
		where $W$ is the quadratic form
		\begin{equation}\label{eq_quad_form_ABCD}
			W(x,x') = \frac{1}{2}DB^{-1}x^2-B^{-1}x \cdot x' + \frac{1}{2} B^{-1}A {x'}^2
		\end{equation}
		where $DB^{-1}$ and $B^{-1}A$ are symmetric.
		\item To every quadratic form
		\begin{equation}\label{eq_quad_form_PQL}
			W(x,x') = \frac{1}{2} Px^2 - L x \cdot x' + \frac{1}{2} Q {x'}^2
		\end{equation}
		we can associate the free symplectic matrix
		\begin{equation}\label{eq_associate_symplectic_matrix_SW}
			S_W = \SW.
		\end{equation}
		We call the quadratic form in \eqref{eq_quad_form_PQL} the generating function of $S_W$ in \eqref{eq_associate_symplectic_matrix_SW}
	\end{enumerate}
\end{proposition}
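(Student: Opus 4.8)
The plan is to treat the two parts as the two directions of a single correspondence between free symplectic matrices and generating functions, and to verify everything by a direct computation with the block relations forced by symplecticity. Before starting I would record the block identities that $SJS^T = S^TJS = J$ impose, in particular $B^TD = D^TB$, $AB^T = BA^T$ and $DA^T - CB^T = I$; these are exactly the statements that $DB^{-1}$ and $B^{-1}A$ are symmetric (so that $W$ is a genuine quadratic form) together with the identity that will pin down $C$.

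For part (i) I would start from the defining relation $(x,\omega) = S(x',\omega')$, which in block form reads $x = Ax' + B\omega'$ and $\omega = Cx' + D\omega'$. Since $S$ is free, $B$ is invertible, so the first equation can be solved for $\omega'$, giving $\omega' = B^{-1}x - B^{-1}Ax'$. Substituting this into the second equation expresses $\omega$ purely in terms of $x$ and $x'$, namely $\omega = DB^{-1}x + (C - DB^{-1}A)x'$. It then remains to check that these two expressions coincide with $\partial_x W$ and $-\partial_{x'}W$ for the quadratic form in \eqref{eq_quad_form_ABCD}. Differentiating $W$ directly, using the symmetry of $DB^{-1}$ and $B^{-1}A$, yields $\partial_x W = DB^{-1}x - (B^T)^{-1}x'$ and $-\partial_{x'}W = B^{-1}x - B^{-1}Ax'$; the latter matches $\omega'$ immediately.

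The only non-obvious point, which I expect to be the main obstacle, is that the $x'$-coefficient $C - DB^{-1}A$ obtained from the substitution equals the coefficient $-(B^T)^{-1}$ coming from $\partial_x W$. This is where the symplectic structure must enter: from $DA^T - CB^T = I$ one solves $C = (DA^T - I)(B^T)^{-1}$, and using $A^T(B^T)^{-1} = (B^{-1}A)^T = B^{-1}A$ (symmetry again) this simplifies to $C = DB^{-1}A - (B^T)^{-1}$, which is precisely what is needed. The converse direction of the equivalence is the same computation read backwards, since each step — solving for $\omega'$, then substituting — is reversible.

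For part (ii) I would use (i) as a template and simply match coefficients. Comparing \eqref{eq_quad_form_PQL} with \eqref{eq_quad_form_ABCD} forces $B^{-1} = L$, $B^{-1}A = Q$ and $DB^{-1} = P$, whence $B = L^{-1}$, $A = L^{-1}Q$, $D = PL^{-1}$, and the relation $C = DB^{-1}A - (B^T)^{-1}$ gives $C = PL^{-1}Q - L^T$; this reproduces $S_W$ in \eqref{eq_associate_symplectic_matrix_SW}. It then remains only to confirm that $S_W$ is a free symplectic matrix: freeness is clear since $B = L^{-1}$ is invertible, and symplecticity is a routine verification of the three block conditions $AB^T = BA^T$, $CD^T = DC^T$ and $AD^T - BC^T = I$, each following from the symmetry of $P$ and $Q$ and the invertibility of $L$. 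I would carry out, say, $AD^T - BC^T = I$ explicitly and note that the remaining two are entirely analogous.
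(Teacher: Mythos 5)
Your proof is correct and complete: the block identities you record are exactly those forced by $SJS^T = S^TJS = J$, the symmetry of $DB^{-1}$ and $B^{-1}A$ is correctly reduced to $B^TD = D^TB$ and $AB^T = BA^T$, and you rightly identify and resolve the one non-trivial step, $C = DB^{-1}A - (B^T)^{-1}$, via $DA^T - CB^T = I$. The paper itself gives no proof of this proposition (it defers to de Gosson's book), and your direct computation is precisely the standard argument found there, matching the paper's own motivating discussion of solving $x = Ax' + B\omega'$ for $\omega'$ --- which is exactly why $B$ must be invertible.
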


\begin{remark}
	Note the connection between the generating function and the factorization of a free symplectic matrix.
\end{remark}

\begin{theorem}\label{thm_factorization_smyplectic_matrix}
	For every $S \in Sp(2d,\R)$ there exist two (non-unique) free symplectic matrices $S_W$ and $S_{W'}$ such that $S = S_W S_{W'}$.
\end{theorem}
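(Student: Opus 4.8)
The plan is to produce an explicit free factor on the right and read off the other factor. Fix a symmetric $d \times d$ matrix $P$ and consider the product $V_P J$, with $V_P$ and $J$ as in Definition \ref{definition_generator_matrix}. A direct computation gives
\[
	V_P J = \left(\begin{array}{cc} 0 & I \\ -I & -P \end{array}\right),
\]
whose upper right block is $I$, so $V_P J$ is a free symplectic matrix for every symmetric $P$; its inverse $(V_P J)^{-1} = -J V_{-P}$ has upper right block $-I$ and is therefore free as well. Writing $S = \S$, the upper right block of $S V_P J$ is $A - BP$. Hence, setting $S_W := S V_P J$ and $S_{W'} := (V_P J)^{-1}$, I obtain a factorization $S = S_W S_{W'}$ into two free symplectic matrices as soon as I can choose a symmetric $P$ for which $A - BP$ is invertible. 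Each factor then carries a generating function by Proposition \ref{proposition_quad_form_symplectic_matrix_equivalence}, which also accounts for the non-uniqueness, since $P$ may be varied.

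It therefore remains to prove the linear-algebra claim: for the blocks $A, B$ of a symplectic matrix there exists a symmetric $P$ with $\det(A - BP) \neq 0$. The two facts extracted from \eqref{eq_symplectic_matrix} that I would use are that $AB^T$ is symmetric and that the $d \times 2d$ block $(A \,|\, B)$ has full rank $d$ (the latter because the first $d$ rows of the invertible matrix $S$ are independent). In the two extremal cases the choice is explicit. If $B$ is invertible, then $B^{-1}A$ is symmetric (multiply $AB^T = BA^T$ on the left by $B^{-1}$ and on the right by $(B^{-1})^T$), so $P := B^{-1}A - I$ is admissible and yields $A - BP = B$. If instead $D$ is invertible, then $D^{-1}C$ is symmetric (using the relation $CD^T = DC^T$), and $P := D^{-1}C$ gives $(A - BP)D^T = AD^T - BC^T = I$, so $A - BP = (D^T)^{-1}$.

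The general case, where neither $B$ nor $D$ is invertible, is the step I expect to be the main obstacle, and it is cleanest to phrase geometrically. The symmetry of $AB^T$ is precisely the statement that $\ker(A \,|\, B)$ is a Lagrangian subspace of $\R^{2d}$, while $A - BP$ is invertible if and only if the graph Lagrangian $\lbrace (v, -Pv) : v \in \Rd \rbrace$ is transversal to $\ker(A \,|\, B)$; as $P$ ranges over symmetric matrices these graphs exhaust all Lagrangians transversal to the vertical subspace. Since in the connected Lagrangian Grassmannian the Lagrangians transversal to a fixed one form a dense open set, and the graph Lagrangians form a dense open set as well, the two families must intersect. Equivalently, the polynomial $P \mapsto \det(A - BP)$ is not identically zero on the space of symmetric matrices, so a generic symmetric $P$ is admissible. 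This furnishes the required $P$ and completes the factorization $S = S_W S_{W'}$.
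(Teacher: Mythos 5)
Your proof is correct, but note that the paper itself contains no proof of Theorem \ref{thm_factorization_smyplectic_matrix}: the result is imported from de Gosson's book \cite{Gos11}, so the relevant comparison is with the classical argument given there. That argument is coordinate-free: a matrix $M \in Sp(2d,\R)$ is free precisely when $M\ell_0 \cap \ell_0 = \lbrace 0 \rbrace$, where $\ell_0 = \lbrace 0 \rbrace \times \R^d$ is the vertical Lagrangian plane; one chooses a Lagrangian plane $\ell$ transversal to both $\ell_0$ and $S^{-1}\ell_0$ (a common transversal always exists), picks any $S_1$ with $S_1 \ell_0 = \ell$, and checks that $S = (S S_1)\, S_1^{-1}$ is a product of two free matrices. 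Your construction is exactly this scheme made explicit, with $S_1 = V_P J$: indeed $V_P J \ell_0$ is the graph $\lbrace (v, -Pv) : v \in \R^d \rbrace$ and $\ker(A \,|\, B) = S^{-1}\ell_0$, so your two transversality conditions coincide with de Gosson's, transversality of a graph to $\ell_0$ being automatic. What your version buys is concreteness: the admissible parameters are the complement of the zero set of the polynomial $P \mapsto \det(A - BP)$ on symmetric matrices, which makes the non-uniqueness quantitative, and Proposition \ref{proposition_quad_form_symplectic_matrix_equivalence} hands you the generating functions of both factors; what the abstract version buys is brevity, given the transitivity of $Sp(2d,\R)$ on pairs of transversal Lagrangian planes. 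Two small improvements. First, connectedness of the Lagrangian Grassmannian is beside the point: two dense open subsets of any nonempty topological space already intersect, so density and openness suffice. Second, the one step you leave at citation level (density of the transversal Lagrangians) can be replaced by a short computation showing directly that $\det(A - BP)$ is not identically zero: if $(A^T + iB^T)(u + iv) = 0$ with $u, v \in \R^d$, then $A^T u = B^T v$ and $A^T v = -B^T u$, whence $(AA^T + BB^T)u = AB^T v - BA^T v = 0$ by the symmetry of $AB^T$; since $\mathrm{rank}(A \,|\, B) = d$ makes $AA^T + BB^T$ positive definite, $u = 0$, and then $A^T v = B^T v = 0$ forces $v = 0$. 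Hence $A + iB$ is invertible, and since $P = -iI$ is a complex symmetric matrix with $\det\left(A - B(-iI)\right) = \det(A + iB) \neq 0$, the polynomial $\det(A - BP)$ does not vanish identically on complex symmetric matrices, hence not on real symmetric ones either (a real polynomial vanishing at all real points is the zero polynomial), so a generic real symmetric $P$ is admissible. With that substitution your argument is fully self-contained and elementary, which is arguably an advantage over the reference proof.
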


\begin{corollary}\label{cor_generator_matrix}
	The set of all matrices
	\begin{equation}
		\lbrace V_P, M_L, J \rbrace
	\end{equation}
	generates the symplectic group $Sp(2d,\R)$.
\end{corollary}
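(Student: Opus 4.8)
The plan is to obtain the corollary as an immediate consequence of the two preceding results, Theorem~\ref{thm_factorization_smyplectic_matrix} and Proposition~\ref{proposition_symplectic_factorization}, so that no genuinely new argument is required. Recall that the statement ``$\lbrace V_P, M_L, J \rbrace$ generates $Sp(2d,\R)$'' means that the smallest subgroup containing all these matrices (for every symmetric $P$ and every invertible $L$) is the whole group. One inclusion is automatic: each of $V_P$, $M_L$, $J$ lies in $Sp(2d,\R)$ by Definition~\ref{definition_generator_matrix}, and since $Sp(2d,\R)$ is a group, every product of them stays inside. The substance is the reverse inclusion, namely that every symplectic matrix can be built from these three types.

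For the reverse inclusion I would fix an arbitrary $S \in Sp(2d,\R)$ and first apply Theorem~\ref{thm_factorization_smyplectic_matrix} to write $S = S_W S_{W'}$ as a product of two free symplectic matrices. I would then apply Proposition~\ref{proposition_symplectic_factorization} separately to each factor: since a free symplectic matrix has invertible upper-right block, the proposition expresses it as $V_{-DB^{-1}} M_{B^{-1}} J V_{-B^{-1}A}$, a product of four generator matrices. Concatenating the two factorizations exhibits $S$ itself as a product of at most eight of the matrices $V_P$, $M_L$, $J$, which is exactly what the reverse inclusion demands.

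The only verification worth spelling out is that the matrices produced by the factorization are admissible instances of the generators. The blocks $DB^{-1}$ and $B^{-1}A$ of a free symplectic matrix are symmetric --- this is precisely the symmetry recorded in Proposition~\ref{proposition_quad_form_symplectic_matrix_equivalence} --- so $V_{-DB^{-1}}$ and $V_{-B^{-1}A}$ are legitimate values of $V_P$; likewise $B^{-1}$ is invertible, so $M_{B^{-1}}$ is a legitimate value of $M_L$. With this observed, both inclusions are in hand and the subgroup generated by $\lbrace V_P, M_L, J \rbrace$ coincides with $Sp(2d,\R)$.

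Since the real work --- the two-free-factor decomposition of Theorem~\ref{thm_factorization_smyplectic_matrix} and the explicit factorization of a single free matrix in Proposition~\ref{proposition_symplectic_factorization} --- is already supplied, there is no serious obstacle remaining; the argument is essentially bookkeeping. As a consistency check one may also note that the family is closed under inversion, since $V_P^{-1} = V_{-P}$, $M_L^{-1} = M_{L^{-1}}$ and $J^{-1} = -J = J^3$, so one never needs to leave the three listed types even when forming inverses.
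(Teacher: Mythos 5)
Your proposal is correct and follows exactly the route the paper intends: the corollary is stated as an immediate consequence of Theorem~\ref{thm_factorization_smyplectic_matrix} (every symplectic matrix is a product of two free ones) combined with Proposition~\ref{proposition_symplectic_factorization} (every free symplectic matrix factors as $V_{-DB^{-1}}M_{B^{-1}}JV_{-B^{-1}A}$). Your extra checks --- symmetry of $DB^{-1}$ and $B^{-1}A$ via Proposition~\ref{proposition_quad_form_symplectic_matrix_equivalence}, invertibility of $B^{-1}$, and closure of the generator family under inversion --- are all sound and merely make explicit what the paper leaves implicit.
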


\section{The Metaplectic Group}\label{MetaGroup}

The second way to perform a deformation of a Gabor frames is to perturb the window. We will describe this process by letting some unitary operator act on the window. In fact, we will only deal with some special operators called metaplectic.

The metaplectic group and its elements, the metaplectic operators, are widely used in quantum mechanics and in time-frequency analysis. There is a close connection to the symplectic group and this interplay might be used to solve problems in quantum mechanics once the solution for the corresponding classical problem is known \cite{Gos11}. In time-frequency analysis this property can be used to deform Gabor frames without destroying their frame property and even keeping the optimal frame bounds \cite{Gos14}, \cite{Gro01}.

Again, since there is not much difference between formulating the results for $d=1$ and $d>1$, we state them in full generality, although we will only consider the case $d=1$ later on. The results can be found in \cite{Gos11} in full detail.

\subsection{The Group $Mp(2d,\mathbb{R})$}

\begin{definition}\label{definition_metaplectic_group_exact}
	The \textit{metaplectic group} $Mp(2d,\R)$ is the connected two-fold cover of the symplectic group $Sp(2d,\R)$. Equivalently, we can define $Mp(2d,\R)$ by saying that the sequence
	\begin{equation}
		0 \rightarrow \Z_2 \rightarrow Mp(2d,\R) \rightarrow Sp(2d,\R) \rightarrow 0
	\end{equation}
	is exact.
\end{definition}

\begin{remark}
	A sequence
	\begin{equation}
		A_0 \rightarrow A_1 \rightarrow \dots \rightarrow A_n \rightarrow A_{n+1}
	\end{equation}
	of morphisms is called \textit{exact}, if the image of each morphism is equal to the kernel of the next morphism
	\begin{equation}
		im(A_{k-1} \rightarrow A_k) = ker(A_k \rightarrow A_{k+1}), \quad k = 1, \dots , n.
	\end{equation}
\end{remark}

We want to use a more constructive approach to define the metaplectic group.

\subsection{Metaplectic Operators and the Quadratic Fourier Transform}

The metaplectic group is a group of unitary operators on $\Lt$ \cite{Gos07}, \cite{Gos11}, \cite{Gro01}, \cite{Rei89}. Let $\psi \in \mathcal{S}(\R^d)$ be a function in the Schwartz space. Following de Gosson \cite{Gos11} we define the following operators.
\begin{itemize}
	\item The modified Fourier transform $\modFT$ defined by
	\begin{equation}\label{eq_modified_FT}
		\modFT \psi(t) = i^{-d/2} \int_{\R^d} \psi(t') \, e^{-2 \pi i \, t \cdot t'} \, dt'.
	\end{equation}
	\item The linear ``chirps"
	\begin{equation}\label{eq_chirp}
		\chirp \psi(t) = e^{\pi i \, Pt \cdot t} \psi(t)
	\end{equation}
	with $P$ being a real, symmetric $d \times d$ matrix.
	\item The rescaling operator
	\begin{equation}\label{eq_rescale_operator}
		\rescaleop[L,n] \psi(t) = i^n \sqrt{|det(L)|} \psi(Lt),
	\end{equation}
	where $L$ is invertible and $n$ is an integer corresponding to a choice of $arg(det(L))$, to be more precise
	\begin{equation}\label{eq_Maslov_index}
		n \pi \equiv arg(det(L)) \quad \text{mod } 2 \pi.
	\end{equation}
\end{itemize}
The class modulo $4$ of the integer $n$ appearing in the definition of the rescaling operator \eqref{eq_rescale_operator} is called ``Maslov index" \cite{Gos11}, \cite{GosLue14}.

As in the section on the symplectic group we will associate quadratic forms to metaplectic operators and we will also see the interplay between the symplectic and the metaplectic group.

\begin{definition}\label{definition_quadratic_FT}
	Let $S_W$ be the free symplectic matrix
	\begin{equation}
		S_W = \SW
	\end{equation}
	associated to the quadratic form $W(t,t') = \frac{1}{2} Pt^2 - L t \cdot t' + \frac{1}{2} Q {t'}^2$ (compare proposition \ref{proposition_quad_form_symplectic_matrix_equivalence} equations \eqref{eq_quad_form_PQL} and \eqref{eq_associate_symplectic_matrix_SW}).
	Let the operators $\modFT, \chirp$ and $\rescaleop[L,n]$ be defined as in \eqref{eq_modified_FT}, \eqref{eq_chirp} and \eqref{eq_rescale_operator} respectively. We call the operator
	\begin{equation}\label{eq_quadratic_FT}
		\metaop[W,n] = \chirp \rescaleop[L,n] \modFT \chirp[-Q]
	\end{equation}
	the \textit{quadratic Fourier transform} associated to the free symplectic matrix $S_W$.
\end{definition}

For $\psi \in \mathcal{S}(\R^d)$ we have the explicit formula
\begin{equation}\label{eq_quadratic_FT_formula}
	\metaop[W,n] \psi(t) = i^{n-\frac{d}{2}} \sqrt{|det(L)|} \int_{\R^d}  \psi(t') \, e^{2 \pi i \, W(t,t')} \, dt',
\end{equation}
where $W(t,t')$ is again the quadratic form as defined in \eqref{eq_quad_form_PQL} and Definition \ref{definition_quadratic_FT}.

\begin{remark}
	Although, all statements in this section were formulated for the Schwartz space $\mathcal{S}(\R^d)$, they also hold for Feichtinger's algebra $\mathcal{S}_0(\R^d)$ as well as for the Hilbert space $\Lt$ \cite{Gos14}.
\end{remark}

\begin{remark}
	We will frequently drop one or both of the indices $W$ and $n$ and will write $S$ instead of $S_W$ and $\widehat{S}$ or $\metaop$ instead of $\metaop[W,n]$. When the context allows, we will also use other indices than the ones mentioned.
\end{remark}

\begin{remark}
	As can be seen by formula \eqref{eq_quadratic_FT} a quadratic Fourier transform is a manipulation of a (suitable) function by a chirp, a modified Fourier transform, a dilation and another chirp. This is the exact same way in which the \textit{fractional Fourier transform} is described in \cite{Alm94} with an additional dilation in between the modified Fourier transform and the second chirp. Hence, the quadratic Fourier transform is an extension of the fractional Fourier transform in the sense that the directions in the time-frequency plane are scaled by some factor depending on the angle. For more details on the fractional Fourier transform see also \cite{GosLue14}.
\end{remark}

\begin{proposition}\label{proposition_inverse_meta}
	The operators $\metaop[W,n]$ extend to unitary operators on $\Lt$ and the inverse is given by
	\begin{equation}
		\metaop[W,n]^{-1} = \metaop[W^*,n^*],
	\end{equation}
	where $W^*(t,t') = -W(t',t)$ and $n^* = d-n$.
\end{proposition}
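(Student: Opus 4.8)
The plan is to prove the two assertions in turn: unitarity first, and then---exploiting that the inverse of a unitary operator is its adjoint---to identify the inverse with $\metaop[W^*,n^*]$ by comparing integral kernels, which avoids any direct composition of the operators.

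First I would establish unitarity by inspecting the three building blocks in the factorization $\metaop[W,n] = \chirp \rescaleop[L,n] \modFT \chirp[-Q]$. The chirps $\chirp$ and $\chirp[-Q]$ act by multiplication with the unimodular functions $e^{\pi i Pt\cdot t}$ and $e^{-\pi i Qt\cdot t}$ and are therefore unitary; the modified Fourier transform $\modFT$ equals the ordinary Fourier transform up to the unimodular scalar $i^{-d/2}$ and hence is unitary by Plancherel's theorem; and for $\rescaleop[L,n]$ the substitution $s = Lt$ together with the normalization $\sqrt{|\det L|}$ gives $\norm{\rescaleop[L,n]\psi}_2 = \norm{\psi}_2$, the factor $i^n$ again being unimodular. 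As a composition of unitaries defined on the dense subspace $\mathcal{S}(\Rd)$, the operator $\metaop[W,n]$ extends to a unitary operator on $\Lt$ (the formulas persisting on $\So$ and $\Lt$ as noted above).

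Granting unitarity, I would write $\metaop[W,n]^{-1} = \metaop[W,n]^*$. By \eqref{eq_quadratic_FT_formula} the operator $\metaop[W,n]$ is the integral operator with kernel $K(t,t') = i^{n-\frac{d}{2}}\sqrt{|\det L|}\,e^{2\pi i W(t,t')}$, so its adjoint has kernel $\overline{K(t',t)} = \overline{i^{n-\frac{d}{2}}}\,\sqrt{|\det L|}\,e^{-2\pi i W(t',t)}$. Setting $W^*(t,t') = -W(t',t)$ turns the conjugated exponential into $e^{2\pi i W^*(t,t')}$. Writing $W^*$ in the standard form \eqref{eq_quad_form_PQL} shows that its cross coefficient is $L^* = -L^T$ (with $P^* = -Q$ and $Q^* = -P$), so that $\sqrt{|\det L^*|} = \sqrt{|\det L|}$; one checks moreover that the associated free symplectic matrix $S_{W^*}$ is precisely $S_W^{-1}$, which by \eqref{eq_inverse_symplectic_matrix} is again free, so $W^*$ is a legitimate generating function. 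Finally I would match the phases: $\overline{i^{n-\frac{d}{2}}} = i^{-(n-\frac{d}{2})} = i^{(d-n)-\frac{d}{2}}$, which is exactly $i^{n^*-\frac{d}{2}}$ for $n^* = d-n$. Hence the adjoint kernel coincides with $i^{n^*-\frac{d}{2}}\sqrt{|\det L^*|}\,e^{2\pi i W^*(t,t')}$, the kernel of $\metaop[W^*,n^*]$, and therefore $\metaop[W,n]^{-1} = \metaop[W^*,n^*]$.

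The main obstacle I anticipate is the bookkeeping of the Maslov phase together with the determinant normalization, i.e.\ verifying simultaneously that $\overline{i^{n-\frac{d}{2}}} = i^{n^*-\frac{d}{2}}$ forces $n^* = d-n$ and that the transformed rescaling parameter $L^* = -L^T$ leaves $\sqrt{|\det L^*|}$ unchanged; confirming that $S_{W^*} = S_W^{-1}$ is free is the accompanying consistency check. An alternative route---composing the two integral operators directly and evaluating the resulting Fresnel-type Gaussian integral to a Dirac mass---would reach the same conclusion but is computationally far heavier, which is why I would prefer the adjoint argument.
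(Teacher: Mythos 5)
Your proof is correct, but it takes a genuinely different route from the paper's. The paper inverts the factorization $\metaop[W,n] = \chirp \rescaleop[L,n] \modFT \chirp[-Q]$ algebraically: it records the inverses of the three generators, $\chirp^{-1} = \chirp[P]$, $\rescaleop[L,n]^{-1} = \rescaleop[L^{-1},-n]$, and the explicit inverse of $\modFT$, and then uses the commutation relation $\modFT^{-1}\rescaleop[L^{-1},-n] = \rescaleop[-L^T,d-n]\modFT$ to rewrite the reversed product $\chirp[Q]\modFT^{-1}\rescaleop[L^{-1},-n]\chirp[P]$ back into the standard form of a quadratic Fourier transform, from which $P^* = -Q$, $Q^* = -P$, $L^* = -L^T$ and $n^* = d-n$ are read off. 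You instead identify $\metaop[W,n]^{-1}$ with the adjoint (legitimate once unitarity is in hand; your unitarity argument, via the three unitary factors, is the same as the paper's) and compare kernels using \eqref{eq_quadratic_FT_formula}: conjugating and transposing the kernel $i^{n-\frac{d}{2}}\sqrt{|\det L|}\,e^{2\pi i W(t,t')}$ yields in one stroke the phase $i^{(d-n)-\frac{d}{2}}$, the quadratic form $W^*(t,t') = -W(t',t)$, and the unchanged normalization $|\det(-L^T)| = |\det L|$. Each approach buys something: the paper's computation stays inside the generator algebra and exhibits the commutation rule that governs the Maslov-index bookkeeping, which is reusable in building up $Mp(2d,\R)$, whereas your kernel argument is shorter and obtains $n^* = d-n$ without any operator gymnastics; moreover your verification that $S_{W^*} = S_W^{-1}$, via \eqref{eq_inverse_symplectic_matrix} and the symmetry of $P$ and $Q$, is exactly the consistency check needed to certify that $W^*$ is a bona fide generating function of a free symplectic matrix, so that $\metaop[W^*,n^*]$ is indeed a quadratic Fourier transform. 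The only point worth making explicit in your write-up is that the adjoint-kernel identity $\overline{K(t',t)}$ is applied to an oscillatory (bounded, non-decaying) kernel; pairing against Schwartz functions and Fubini make this rigorous there, and density then transfers the operator identity to $\Lt$.
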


The fact that $\metaop[W,n]$ is a unitary operator is clear since, $\chirp$, $\rescaleop[L,n]$ and $\modFT$ are unitary. Obviously, we have
\begin{equation}
	\chirp^{-1} = \chirp[P], \qquad \rescaleop[L,n]^{-1} = \rescaleop[L^{-1},-n]
\end{equation}
and the inverse of the modified Fourier transform is given by
\begin{equation}
	\modFT^{-1} \psi(t) = i^{d/2} \int_{\R^{d}} \psi(t') \, e^{2 \pi i t \cdot t'} \, dt'
\end{equation}
We note that
\begin{equation}
	\modFT^{-1} \rescaleop[L^{-1},-n] = \rescaleop[-L^T,d-n] \modFT
\end{equation}
and hence,
\begin{equation}
	\metaop[W,n]^{-1} = \chirp[Q] \modFT^{-1} \rescaleop[L^{-1},-n] \chirp[P] = \metaop[W*,n*].
\end{equation}

\begin{definition}
	The subgroup of $\mathcal{U}(\Lt)$ generated by the quadratic Fourier transforms $\metaop[W,n]$ is called the metaplectic group and is denoted by $Mp(2d,\R)$. Its elements are called metaplectic operators.
\end{definition}

\begin{remark}
	To each quadratic form $W(t,t')$ we can actually associate not one but two metaplectic operators as, due to \eqref{eq_Maslov_index}, $\metaop[W,n]$ and $\metaop[W,n+2] = -\metaop[W,n]$ are equally good choices. This reflects the fact that the metaplectic operators are elements of the two-fold cover of the symplectic group.
\end{remark}

\begin{theorem}\label{thm_factorization_meta}
	For every $\widehat{S} \in Mp(2d,\R)$ there exist two quadratic Fourier transforms $\metaop[W,n]$ and $\metaop[W',n']$ such that $\widehat{S} = \metaop[W,n] \metaop[W',n']$.
\end{theorem}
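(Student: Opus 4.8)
The plan is to lift the symplectic factorization of Theorem \ref{thm_factorization_smyplectic_matrix} through the two-fold covering projection $\pi \colon Mp(2d,\R) \to Sp(2d,\R)$ guaranteed by Definition \ref{definition_metaplectic_group_exact}. This projection is the group homomorphism sending each quadratic Fourier transform $\metaop[W,n]$ to the free symplectic matrix $S_W$ associated to it through Definition \ref{definition_quadratic_FT} and Proposition \ref{proposition_quad_form_symplectic_matrix_equivalence}, and whose kernel is the central subgroup $\Z_2 = \lbrace \pm \mathrm{Id} \rbrace$ of $\mathcal{U}(\Lt)$.

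First I would fix $\widehat{S} \in Mp(2d,\R)$ and set $S = \pi(\widehat{S}) \in Sp(2d,\R)$. By Theorem \ref{thm_factorization_smyplectic_matrix} there exist free symplectic matrices $S_W$ and $S_{W'}$ with $S = S_W S_{W'}$. Each of these is, by part (ii) of Proposition \ref{proposition_quad_form_symplectic_matrix_equivalence}, realized as the symplectic matrix attached to a quadratic form, and hence is the image under $\pi$ of a quadratic Fourier transform; I choose lifts $\metaop[W,n]$ and $\metaop[W',n']$ with $\pi(\metaop[W,n]) = S_W$ and $\pi(\metaop[W',n']) = S_{W'}$.

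Since $\pi$ is a homomorphism, the product $\metaop[W,n] \metaop[W',n']$ is a metaplectic operator whose projection equals $S_W S_{W'} = S = \pi(\widehat{S})$. Therefore $\widehat{S}$ and $\metaop[W,n] \metaop[W',n']$ lie in the same fibre of $\pi$, so they differ by an element of the kernel: either $\widehat{S} = \metaop[W,n] \metaop[W',n']$, and we are done, or $\widehat{S} = -\metaop[W,n] \metaop[W',n']$. In the latter case I absorb the sign using the Maslov ambiguity $\metaop[W,n+2] = -\metaop[W,n]$ noted earlier, so that $\widehat{S} = \metaop[W,n+2] \metaop[W',n']$, which is again a product of two quadratic Fourier transforms.

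The main obstacle is not the lifting argument itself but justifying the two properties of $\pi$ that drive it: that the assignment $\metaop[W,n] \mapsto S_W$ extends to a well-defined group homomorphism on all of $Mp(2d,\R)$, so that operator composition corresponds to matrix multiplication, and that its kernel is precisely $\lbrace \pm \mathrm{Id} \rbrace$. Both are part of the standard metaplectic representation theory developed in \cite{Gos11}; once they are available, the factorization follows formally from the symplectic case together with the sign correction supplied by the Maslov index.
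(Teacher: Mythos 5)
There is a genuine circularity in your argument. The paper itself gives no proof of Theorem \ref{thm_factorization_meta} (it defers to \cite{Gos11}), but in the logical development used both here and in \cite{Gos11} the natural projection $\pi^{Mp}$ of Theorem \ref{thm_natural_projection} is introduced \emph{after} the factorization theorem, and for good reason: $\pi^{Mp}$ is defined on a general element of $Mp(2d,\R)$ by first writing it as a product of two quadratic Fourier transforms and sending $\metaop[W,n]\metaop[W',n']$ to $S_W S_{W'}$; the well-definedness of this assignment, its multiplicativity on all of $Mp(2d,\R)$, and the identification $\ker \pi^{Mp} = \lbrace -I, +I \rbrace$ are all derived \emph{from} Theorem \ref{thm_factorization_meta}. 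So when you fix $\widehat{S}$, set $S = \pi(\widehat{S})$, and argue that $\widehat{S}$ and your lifted product lie in the same two-point fibre, you are invoking exactly the machinery whose construction presupposes the statement you are proving.

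The gap matters because $Mp(2d,\R)$ is defined in this paper as the subgroup of $\mathcal{U}(\Lt)$ \emph{generated} by the $\metaop[W,n]$, so a general element is a priori a word of arbitrary length in these generators; the entire content of the theorem is the reduction of such a word to length two. One could try to repair your argument by constructing a projection independently of the factorization, e.g.\ via the covariance relation $\widehat{S}\,\pi(\lambda)\,\widehat{S}^{-1} = \pi(S\lambda)$ checked on the generators $\chirp$, $\rescaleop[L,n]$, $\modFT$; but then irreducibility of the time-frequency shifts only shows the kernel consists of unimodular scalars $c\,I$, and pinning the scalars actually occurring in $Mp(2d,\R)$ down to $\pm I$ is precisely the Maslov-index bookkeeping at the heart of the real proof --- without it, your sign-absorption step $\metaop[W,n+2] = -\metaop[W,n]$ cannot dispose of a general phase $c$, since varying $n$ only produces factors of $\pm 1$. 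The standard (Leray/de Gosson) argument proceeds in the opposite direction: one proves a composition lemma stating that whenever the product $S_W S_{W'}$ is itself a free symplectic matrix, the operator product $\metaop[W,n]\metaop[W',n']$ is again a single quadratic Fourier transform $\metaop[W'',n'']$ (a Gaussian-integral computation in which the Maslov indices add modulo $4$), and then reduces an arbitrary word in the generators to length two by induction, inserting auxiliary factors to steer around non-free intermediate products. That composition lemma, not the lifting through $\pi^{Mp}$, is the missing idea.
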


The factorization in Theorem \ref{thm_factorization_meta} is not unique as the identity operator can always be written as $\metaop[W,n] \metaop[W^*,n^*]$.

\begin{corollary}
	The set of all operators
	\begin{equation}
		\lbrace \chirp, \rescaleop[L,n], \modFT \rbrace
	\end{equation}
	generates the metaplectic group.
\end{corollary}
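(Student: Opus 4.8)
The plan is to deduce this directly from the factorization results already established, mirroring the proof of Corollary \ref{cor_generator_matrix} on the symplectic side but now one level up in the two-fold cover. Concretely, the strategy splits into two steps: first reduce an arbitrary metaplectic operator to a product of quadratic Fourier transforms, and then decompose each quadratic Fourier transform into the three claimed elementary operators. Since $Mp(2d,\R)$ is by definition the subgroup of $\mathcal{U}(\Lt)$ generated by the quadratic Fourier transforms, it suffices to show that each $\metaop[W,n]$ already lies in the subgroup generated by $\lbrace \chirp, \rescaleop[L,n], \modFT \rbrace$.

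For the first step, Theorem \ref{thm_factorization_meta} tells us that every $\widehat{S} \in Mp(2d,\R)$ can be written as $\metaop[W,n] \metaop[W',n']$ for suitable quadratic forms $W, W'$ and integers $n, n'$; so nothing beyond the single quadratic Fourier transform needs to be treated. The second step is then immediate from the \emph{definition} of the quadratic Fourier transform. Indeed, formula \eqref{eq_quadratic_FT} in Definition \ref{definition_quadratic_FT} reads
\[
	\metaop[W,n] = \chirp \, \rescaleop[L,n] \, \modFT \, \chirp[-Q],
\]
so $\metaop[W,n]$ is literally a product of a chirp, a rescaling operator, the modified Fourier transform, and a second chirp. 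The one point to observe is that the final factor $\chirp[-Q] = \widehat{V_{-Q}}$ is again a chirp of the admissible type, because $Q$ is symmetric by the hypotheses of Proposition \ref{proposition_quad_form_symplectic_matrix_equivalence}; likewise the rescaling factor carries an invertible $L$, so every factor appearing in the display belongs to the stated generating family. Combining the two steps, $\widehat{S}$ is a finite product of elements of $\lbrace \chirp, \rescaleop[L,n], \modFT \rbrace$, which is exactly the assertion.

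The argument is essentially bookkeeping, so I do not expect a genuine obstacle; the one point requiring care is the Maslov index. I must make sure that the rescaling operators $\rescaleop[L,n]$ produced by the two factorizations carry an integer $n$ consistent with \eqref{eq_Maslov_index}, i.e.\ that the index attached to $\widehat{S}$ via Theorem \ref{thm_factorization_meta} is compatible with the $arg(det(L))$ entering the corresponding rescaling factor. In group-theoretic terms this amounts to tracking the correct sheet of the two-fold cover throughout the composition, so that the signs $\metaop[W,n+2] = -\metaop[W,n]$ never cause a factor to fall outside the admissible family. Once these indices are reconciled, the claim follows without any computation beyond unwinding the definitions.
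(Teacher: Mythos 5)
Your proof is correct and follows essentially the same route as the paper's (implicit) one: the corollary is an immediate consequence of Theorem \ref{thm_factorization_meta} together with the defining factorization \eqref{eq_quadratic_FT}, and your remark that the Maslov index causes no trouble is right, since the generating family contains $\rescaleop[L,n]$ for every $n$ consistent with \eqref{eq_Maslov_index}, so the sign ambiguity $\metaop[W,n+2] = -\metaop[W,n]$ never leaves the admissible family. The only point you leave unstated is the trivial reverse containment that each of $\chirp$, $\rescaleop[L,n]$, $\modFT$ is itself metaplectic (e.g.\ $\modFT = \metaop[W,0]$ with $W(t,t') = -t \cdot t'$, whence $\rescaleop[L,n]$ and $\chirp$ follow by taking $Q=0$ and then $P=0$ in \eqref{eq_quadratic_FT}), which is routine and clearly intended by the paper.
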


Without further preparation we introduce the \textit{natural projection} of the metaplectic group $Mp(2d,\R)$ onto the symplectic group $Sp(2d,\R)$, which we will denote by $\pi^{Mp}$. For the details we refer to \cite{Gos11}.

\begin{theorem}\label{thm_natural_projection}
	The mapping
	\begin{equation}
		\begin{aligned}
			\pi^{Mp}: & & Mp(2d,\R) & \longrightarrow Sp(2d,\R)\\
			& & \metaop[W,n] & \longmapsto S_W
		\end{aligned}
	\end{equation}
	which to a quadratic Fourier transform associates a free symplectic matrix with generating function $W$, is a surjective group homomorphism. Hence,
	\begin{equation}
		\pi^{Mp} \left( \widehat{S} \widehat{S}' \right) = \pi^{Mp} \left( \widehat{S} \right) \pi^{Mp} \left( \widehat{S}' \right).
	\end{equation}
	and the kernel of $\pi^{Mp}$ is given by
	\begin{equation}
		ker(\pi^{Mp}) = \lbrace -I,+I \rbrace.
	\end{equation}
	hence, $\pi^{Mp}: Mp(2d,\R) \mapsto Sp(2d,\R)$ is a two-fold covering of the symplectic group.
\end{theorem}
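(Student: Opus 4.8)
The plan is to route everything through the symplectic covariance of the metaplectic representation, which turns the seemingly representation-dependent recipe $\metaop[W,n]\mapsto S_W$ into an intrinsic object. The right vehicle is the symmetrised time-frequency shift $\rho(\lambda) = e^{-\pi i\,x\cdot\omega}\pi(\lambda)$, with $\lambda = (x,\omega)$. First I would verify, on the three generators, the exact intertwining relation
\[
	\widehat{G}\,\rho(\lambda)\,\widehat{G}^{-1} = \rho(G\lambda), \qquad (\widehat{G},G)\in\{(\chirp,V_{-P}),\,(\rescaleop[L,n],M_L),\,(\modFT,J)\}.
\]
For $\chirp$ this is completing the square in $e^{\pi i\,Pt\cdot t}$; for $\rescaleop[L,n]$ it is the substitution $t\mapsto Lt$; and for $\modFT$ it is the interchange of translations and modulations under the Fourier transform. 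Because these three operators generate $Mp(2d,\R)$ and conjugation is multiplicative, the relation $\widehat{S}\,\rho(\lambda)\,\widehat{S}^{-1} = \rho(S\lambda)$ then propagates to every $\widehat{S}\in Mp(2d,\R)$, with $S$ the matrix read off on the generators.

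This single relation delivers well-definedness, the homomorphism property and surjectivity in one stroke. For well-definedness, note that $\lambda\mapsto\rho(\lambda)$ is injective, so the matrix $S$ is recovered from $\widehat S$ as the linear map $\lambda\mapsto S\lambda$ realised by conjugation; hence $S$ cannot depend on the (non-unique) factorisation of $\widehat S$ into quadratic Fourier transforms, and in particular $\pi^{Mp}(\metaop[W,n]\metaop[W',n']) = S_W S_{W'}$. The homomorphism property then reads off from $(\widehat{S_1}\widehat{S_2})\rho(\lambda)(\widehat{S_1}\widehat{S_2})^{-1} = \widehat{S_1}\rho(S_2\lambda)\widehat{S_1}^{-1} = \rho(S_1 S_2\lambda)$, which forces the matrix of the product to be $S_1 S_2$. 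For surjectivity, given $S\in Sp(2d,\R)$, Theorem \ref{thm_factorization_smyplectic_matrix} factors $S = S_W S_{W'}$ with $S_W, S_{W'}$ free, and any admissible choice of Maslov indices makes $\metaop[W,n]\metaop[W',n']$ a preimage.

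For the kernel I would argue directly, without invoking irreducibility. Let $\widehat S\in\ker(\pi^{Mp})$ and factor $\widehat S = \metaop[W,n]\metaop[W',n']$ via Theorem \ref{thm_factorization_meta}. The homomorphism property gives $S_W S_{W'} = I$, hence $S_{W'} = S_W^{-1}$, which is exactly the projection of $\metaop[W,n]^{-1} = \metaop[W^*,n^*]$ (Proposition \ref{proposition_inverse_meta}). Since a free symplectic matrix determines its generating function uniquely through \eqref{eq_quad_form_ABCD}, the equality of projections forces $W' = W^*$; and for a fixed generating function the Maslov constraint \eqref{eq_Maslov_index} leaves exactly two operators, namely $\metaop[W^*,n^*]$ and $\metaop[W^*,n^*+2] = -\metaop[W^*,n^*]$. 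Thus $\metaop[W',n'] = \pm\metaop[W,n]^{-1}$ and $\widehat S = \metaop[W,n]\metaop[W',n'] = \pm I$. As both $+I$ and $-I$ lie in the kernel, $\ker(\pi^{Mp}) = \{-I,+I\}$, and a surjective homomorphism of connected Lie groups with two-element kernel is precisely a two-fold covering of $Sp(2d,\R)$.

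The hard part will be the covariance relation, not the kernel. With the naive shift $\pi(\lambda)$ the chirp conjugation leaves a residual phase $e^{-\pi i\,Px\cdot x}$ that is quadratic in $x$, and one has to discover that the symmetrisation $e^{-\pi i\,x\cdot\omega}$ — and only this one — absorbs it, because replacing $\omega$ by $\omega+Px$ converts the bilinear correction into exactly that quadratic phase. One must then carry the half-integer powers of $i$ from \eqref{eq_quadratic_FT_formula} through the $\modFT$ conjugation without sign errors; this bookkeeping is precisely what the Maslov index records, and it is what is ultimately responsible for the two-element kernel.
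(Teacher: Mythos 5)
The paper contains no proof of this theorem at all --- it is introduced ``without further preparation'' and the details are delegated to \cite{Gos11} --- so the comparison has to be made against the cited source, and there your argument is essentially the standard one: your symmetrized shifts $\rho(\lambda)=e^{-\pi i\,x\cdot\omega}\pi(\lambda)$ are exactly de Gosson's Heisenberg--Weyl operators, and he too characterizes $\pi^{Mp}(\widehat{S})$ as the unique $S$ realized by conjugation, $\widehat{S}\rho(\lambda)\widehat{S}^{-1}=\rho(S\lambda)$, checked on generators, with the kernel computed through the two-factor decomposition. Your computations come out right with the paper's sign conventions: conjugation by $\chirp$, i.e.\ multiplication by $e^{\pi i\,Pt\cdot t}$, sends $\rho(x,\omega)$ to $\rho(x,\omega+Px)=\rho(V_{-P}\lambda)$, consistent with the remark that the generators project to the generators; conjugation by $\rescaleop[L,n]$ gives $(x,\omega)\mapsto(L^{-1}x,L^{T}\omega)=M_L\lambda$, the factor $i^{n}$ cancelling and the phase surviving because $(L^{-1}x)\cdot(L^{T}\omega)=x\cdot\omega$; and $\modFT\rho(x,\omega)\modFT^{-1}=\rho(\omega,-x)=\rho(J\lambda)$ exactly, with no residual phase. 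Injectivity of $\rho$ then makes $S$ intrinsic to the operator $\widehat{S}$, which is precisely what well-definedness of the projection on the group generated by quadratic Fourier transforms requires; surjectivity follows from Theorem \ref{thm_factorization_smyplectic_matrix} as you say; and your kernel argument closes correctly: if $\widehat{S}=\metaop[W,n]\metaop[W',n']$ projects to $I$, then $S_{W'}=S_W^{-1}$, which is again free since by \eqref{eq_inverse_symplectic_matrix} its upper-right block is $-B^{T}$; formula \eqref{eq_quad_form_ABCD} recovers the generating function from the blocks, so $W'=W^{*}$ by Proposition \ref{proposition_inverse_meta}; and since $\det L\neq 0$ is real, \eqref{eq_Maslov_index} fixes $n'$ modulo $2$, leaving exactly the two operators $\pm\metaop[W,n]^{-1}$ and hence $\widehat{S}=\pm I$.

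Two minor points. First, your claim that \emph{only} the symmetrized phase works is overstated: with the plain shift $\pi(\lambda)$ the covariance relation still holds up to a scalar phase, and since time-frequency shifts with distinct $\lambda$ are never proportional, this projective version would determine $S$ equally well; the symmetrization buys an exact intertwining and cleaner bookkeeping, nothing more. Second, the concluding step from ``surjective homomorphism with kernel $\lbrace -I,+I\rbrace$'' to ``two-fold covering'' tacitly uses continuity of $\pi^{Mp}$ and discreteness of the kernel, together with connectedness of $Mp(2d,\R)$ --- which the paper builds into Definition \ref{definition_metaplectic_group_exact} but which, for the group generated by quadratic Fourier transforms, requires connecting each $\metaop[W,n]$ to the identity; at the paper's level of rigor this is acceptable, but it merits a sentence if the proof is to stand alone.
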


\begin{definition}
	The mapping $\pi^{Mp}$ in Theorem \ref{thm_natural_projection} is called the natural projection of $Mp(2d,\R)$ onto $Sp(2d,\R)$.
\end{definition}

\begin{remark}
	The natural projections of the metaplectic generator elements are the symplectic generator elements.
	\begin{equation}
		\pi^{Mp}\left(\chirp[P]\right) = V_P, \quad \pi^{Mp}\left(\rescaleop[L,n]\right) = M_L, \quad \pi^{Mp}\left(\modFT\right) = J.
	\end{equation}
\end{remark}

\section{Gabor Frame Sets of Invariance}\label{Sets_of_Invariance}

We prepared the machinery of the symplectic and metaplectic group to the extend we need it in order to be able to deform Gabor frames without destroying their frame property. We are interested in Gabor frame deformations which leave the frame bounds invariant.

\begin{definition}
	Assume $\G(g,\Lambda)$ and $\G(g',\L')$ are Gabor frames with the same optimal frame bounds $A$ and $B$. We write
	\begin{equation}
		\G(g,\Lambda) \cong \G(g',\L').
	\end{equation}
\end{definition}

\begin{theorem}\label{FrameBounds}
	Let $\G(g,\Lambda)$ be a Gabor frame with optimal frame bounds $A$ and $B$. Let $\widehat{S} \in Mp(2d,\R)$ with projection $\pi^{Mp}(\widehat{S}) = S \in Sp(2d,\R)$. Then $\G(\widehat{S}g, S \Lambda)$ is also a Gabor frame and has the same optimal frame bounds $A$ and $B$.
\end{theorem}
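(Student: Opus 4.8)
The plan is to reduce the entire statement to the \emph{symplectic covariance} of the metaplectic representation, namely the intertwining relation
\begin{equation}
	\widehat{S}\,\pi(\l)\,\widehat{S}^{-1} = c(\l)\,\pi(S\l), \qquad |c(\l)| = 1,
\end{equation}
valid for every $\widehat{S} \in Mp(2d,\R)$ with $\pi^{Mp}(\widehat{S}) = S$ and every $\l \in \R^{2d}$. First I would establish this relation. Since $\pi^{Mp}$ is a group homomorphism (Theorem \ref{thm_natural_projection}) and both sides are multiplicative in $\widehat{S}$, it suffices to verify it on the metaplectic generators $\chirp$, $\rescaleop[L,n]$ and $\modFT$, checking each against its symplectic projection $V_P$, $M_L$ and $J$. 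Each case is a direct computation from the explicit formulas \eqref{eq_modified_FT}, \eqref{eq_chirp}, \eqref{eq_rescale_operator} together with $\pi(\l) = M_\omega T_x$; the unimodular factor $c(\l)$ absorbs the phase discrepancies that arise (for instance from the non-commutativity $M_\omega T_x = e^{2\pi i \omega\cdot x} T_x M_\omega$). The only feature I need downstream is that $|c(\l)| = 1$, so these phases are irrelevant.

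Granting the covariance relation, the argument is short. The deformed system is $\G(\widehat{S}g, S\L) = \lbrace \pi(S\l)\widehat{S}g \mid \l \in \L \rbrace$, since $S\l$ ranges over $S\L$ as $\l$ ranges over $\L$. For arbitrary $f \in \Lt$, I would rewrite each atom, using the covariance relation in the form $\pi(S\l)\widehat{S} = \overline{c(\l)}\,\widehat{S}\,\pi(\l)$, and then transfer $\widehat{S}$ to the other slot by unitarity:
\begin{equation}
	\left| \langle f, \pi(S\l)\widehat{S}g \rangle \right|^2 = \left| \langle f, \widehat{S}\,\pi(\l)g \rangle \right|^2 = \left| \langle \widehat{S}^{-1}f, \pi(\l)g \rangle \right|^2,
\end{equation}
the factor $\overline{c(\l)}$ dropping out under the modulus. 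Summing over $\l \in \L$ and applying the frame inequality \eqref{Frame} for $\G(g,\L)$ to the vector $\widehat{S}^{-1}f$ yields
\begin{equation}
	A\,\norm{\widehat{S}^{-1}f}_2^2 \leq \sum_{\l \in \L} \left| \langle f, \pi(S\l)\widehat{S}g \rangle \right|^2 \leq B\,\norm{\widehat{S}^{-1}f}_2^2.
\end{equation}
Since $\widehat{S}^{-1}$ is unitary, $\norm{\widehat{S}^{-1}f}_2 = \norm{f}_2$, so $\G(\widehat{S}g, S\L)$ is a frame with bounds $A$ and $B$.

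For optimality I would observe that the computation above is reversible: because $f \mapsto \widehat{S}^{-1}f$ is a unitary bijection of $\Lt$ onto itself, the collection of quotients $\sum_{\l}|\langle f, \pi(S\l)\widehat{S}g\rangle|^2 / \norm{f}_2^2$ coincides \emph{exactly} with the corresponding collection for $\G(g,\L)$. Hence the largest admissible lower bound and the smallest admissible upper bound are identical for the two frames, so the optimal bounds are precisely $A$ and $B$, not merely bounds compatible with the one-sided estimate above. The main obstacle is Step~1 --- the covariance relation --- which, though classical (see \cite{Gos11}, \cite{Gro01}), must be pinned down with the sign and phase conventions of this paper; everything after it is a reindexing of the sum combined with unitarity.
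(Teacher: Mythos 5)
Your proposal is correct and is essentially the argument the paper relies on: the paper gives no proof of Theorem \ref{FrameBounds} itself but defers to \cite{Gos14}, where the theorem is proved by exactly your route --- metaplectic covariance $\widehat{S}\,\pi(\l)\,\widehat{S}^{-1} = c(\l)\,\pi(S\l)$ with $|c(\l)|=1$, followed by reindexing the sum and using unitarity of $\widehat{S}$ --- and the same covariance principle $\pi(\lambda)\,\widehat{S} = \widehat{S}\,\pi\left(S^{-1}\lambda\right)$ is the key identity the paper invokes in Appendix \ref{proof_ellipse_invariance}. Your extra care with the unimodular phase (which the paper's appendix statement suppresses) and with optimality via the unitary bijection $f \mapsto \widehat{S}^{-1}f$ is sound and correctly fills in the details.
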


A full proof is given in \cite{Gos14}.

\begin{remark}
	For any window a phase factor $c \in \C$ with $|c|=1$ is negligible in the sense that $\G(g,\Lambda)$ and $\G(c \, g, \Lambda)$ have the same frame bounds as can directly be seen from equation \eqref{Frame}.
\end{remark}

Theorem \ref{FrameBounds} is a particular case of the notion of \textit{Hamiltonian deformation of Gabor frames} (see \cite{Gos14}). It tells us under which conditions the frame property as well as the optimal frame bounds are kept when a Gabor frame suffers some disturbances. This is a very special case, as in general neither the optimal frame bounds nor the frame property might be kept under some general deformation of the frame. However, there are cases when the frame property might be kept without keeping the optimal frame bounds (see \cite{FeiKai04}, \cite{GroOrtRom13}). This is usually done by either deforming the window and fixing the lattice or the other way round. By Theorem \ref{FrameBounds} we know that these approaches are equivalent as long as we stick to symplectic and metaplectic deformations.

What we will see in the following sections is that it is possible to keep both, the frame property and the optimal frame bounds under certain lattice deformations, without changing the window. This is due to the fact that generalized Hermite functions, including the generalized Gaussians, are eigenfunctions with eigenvalues of modulus 1 of certain metaplectic operators. Hence, the corresponding symplectic matrix will deform the lattice, while the window can remain unchanged.

\subsection{Lattice Rotations and the Standard Gaussian}\label{sec_RotationInvariance}

From this point on, we will only consider the 1-dimensional case. The most popular 1-dimensional window function is probably the \textit{standard Gaussian} $g_1(t) = \go$. Although, Gabor frames with Gaussian window have been studied intensively, we still want to explore and exploit the Gabor family $\G(g_1,\Lambda)$ with $vol(\Lambda) < 1$ such that the frame property \eqref{Frame} is fulfilled \cite{Lyu92}, \cite{Sei92}.

One of the simplest manipulations of our Gabor frame is to rotate the lattice and calculate the corresponding window. This means that our lattice is deformed by the rotation matrix
\begin{equation}
	S_\tau = \RotMat
\end{equation}
and the corresponding deformation of the window is given by the action of the quadratic Fourier transform on the window $g_1$. To derive a formula for the resulting window we use Proposition \ref{proposition_quad_form_symplectic_matrix_equivalence} and Equation \eqref{eq_quadratic_FT_formula}.
\begin{equation}
	\metaop[\tau] \, g_1(t) = i^{n(\tau)-\frac{1}{2}} \sqrt{\frac{1}{|\sin(\tau)|}} \int_{\R} e^{2 \pi i \, W_\tau(t,t')} g_1(t') \, dt',
\end{equation}
where $n(\tau) \in \lbrace 0,1,2,3 \rbrace$ depends on $\tau$ and the choice of $arg\left(\sqrt{\sin(\tau)}\right)$ and where
\begin{equation}
	W_\tau(t,t') = \frac{1}{2 \sin(\tau)} \left((t^2+t'^2)\cos(\tau)-2tt')\right).
\end{equation}
This manipulation is meaningful whenever $ \tau \neq k \pi$, $k \in \Z$. Performing the calculations, we get
\begin{equation}
	\metaop[\tau]g_1(t) = 2^{1/4} i^{n(\tau)} e^{-i \frac{\tau}{2}} e^{-\pi t^2} = c \, g_1(t),
\end{equation}
with $|c| = 1$.
Hence, we have the result
\begin{equation}\label{eq_rotation_invariance}
	\G(g_1,\Lambda) \cong \G(g_1,S_\tau \Lambda),
\end{equation}
which means that the frame bounds of a Gabor frame with window $g_1$ stay invariant under rotation of the lattice.

Although, the ambiguity function of the standard Gaussian $g_1=\go$ is well-known and, though, it is an easy exercise to compute it, we will still do the calculations, as the procedure will be used extensively in a somewhat more general form in the rest of this work. For the definition and an interpretation of the ambiguity function see Appendix \ref{App_AmbiguityFunction}.

\begin{equation}
	\begin{aligned}
		Ag_1 (x, \omega) = & \int_\R g_1 \left(t + \frac{x}{2} \right) \overline{g_1 \left(t - \frac{x}{2} \right)} e^{-2 \pi i \omega t} \, dt\\
		= & \int_\R 2^{1/4} e^{-\pi (t + x/2)^2} 2^{1/4} e^{-\pi (t - x/2)^2} e^{-2 \pi i \omega t} \, dt\\
		= & \sqrt{2} \int_\R e^{-2 \pi (t^2 + x^2/4)} e^{-2 \pi i \omega t} \, dt\\
		= & \, \sqrt{2} \, e^{-\pi \frac{x^2}{2}} \int_\R \frac{1}{\sqrt{2}} e^{- \pi t^2} e^{-2\pi i \frac{\omega}{\sqrt{2}} t}\, dt\\
		= & \, e^{-\pi \frac{x^2}{2}} \, 2^{-1/4} \, \mathcal{F}{g_1} \left(\omega/\sqrt{2} \right)\\
		= & \, e^{-\pi \frac{x^2}{2}} \, 2^{-1/4} \, g_1 \left(\omega/\sqrt{2} \right)\\
		= & \, e^{-\pi \frac{x^2}{2}} e^{-\pi (w/\sqrt{2})^2}\\
		= & \, e^{-\frac{\pi}{2} \left(x^2 + \omega^2 \right)}
	\end{aligned}
\end{equation}

Here, $\F$ denotes the Fourier transform which is given by
\begin{equation}
	\F f(\omega) = \int_{\Rd} f(t) e^{-2 \pi i \omega \cdot t} \, dt.
\end{equation}
What we used in the calculations above are a change of variables and the Fourier invariance of the standard Gaussian, $\mathcal{F}(g_1) = g_1$. For the latter argument see \cite{Fol89}, \cite{Gro01}.

\subsection{Elliptic Deformations and Dilated Gaussians}

In Section \ref{sec_RotationInvariance} we saw that using the standard Gaussian window the Gabor frame bounds stay invariant under a rotation of the lattice. We will now generalize this result using ideas from Hamiltonian mechanics. For an introduction to Hamiltonian mechanics we refer to \cite{Arn89}. The rotation matrix
\begin{equation}
	S_\tau = \RotMat
\end{equation}
determines the \textit{flow} of the \textit{harmonic oscillator} with mass $m=1$ and resonance $\Omega = 1$. The Hamiltonian of this problem is given by
\begin{equation}\label{eq_Hamiltonian}
	H(x,\omega,\tau) = \frac{\omega^2}{2} + \frac{x^2}{2}
\end{equation}
and Hamilton's equations are given by
\begin{equation}
	\frac{d}{d \tau}\lambda = J \VectorT[\frac{\partial}{\partial x}H][\frac{\partial}{\partial \omega}H] = J \lambda,
\end{equation}
where $\lambda = (x,\omega)^T$ and both, $x$ and $\omega$ depend on $\tau$.

Written in its most general form the Hamiltonian of the harmonic oscillator is given by
\begin{equation}\label{eq_Hamilton_m_w0}
	H(x,\omega,\tau) = \frac{\omega^2}{2m} + \frac{m \Omega^2 x^2}{2},
\end{equation}
where $m$ is the mass of the particle and $\Omega$ is the resonance. The trajectories will be ellipses in standard position with semi-axis ratio $m \Omega$.

Assume, we are given the Gabor frame $\G(g_1,\Lambda)$ with standard Gaussian window and arbitrary lattice $\Lambda$, $vol(\Lambda) < 1$. Any dilation of the lattice by a symplectic matrix $M_{\sqrt{m}}$ can be compensated by a metaplectic dilation of the window such that the frame bounds remain unchanged, so
\begin{equation}
	\G(g_1,\Lambda) \cong \G(\rescaleop[\sqrt{m}] \, g_1, M_{\sqrt{m}} \Lambda).
\end{equation}
We compute that the dilated window is of the form
\begin{equation}\label{window_gw0}
	\rescaleop[\sqrt{m}] \, g_1(t) = c \, (2m)^{1/4} e^{-\pi m t^2} = g_{m}(t),
\end{equation}
where $|c|=1$.
Next, we compute the ambiguity function $Ag_{m}$.
\begin{equation}\label{eq_Ambiguity_generalized_Gaussian}
	\begin{aligned}
		Ag_{m}(x,\omega) & = \sqrt{2 m} \int_\R e^{-\pi m (t+x/2)^2} e^{-\pi m (t-x/2)^2} e^{-2 \pi i \omega t} \, dt\\
		& = \sqrt{2 m} \, e^{-\pi m x^2/2} \int_\R e^{-\pi m 2t^2} e^{-2 \pi i \omega t} \, dt\\
		& = e^{-\frac{\pi}{2} \left(m x^2 + \frac{\omega^2}{m}\right)}.
	\end{aligned}
\end{equation}
Hence, any level set of $Ag_{m}$ will be an ellipse in standard position with semi-axis ratio $m$. It is kind of self-evident to examine the behavior of $\G(g_{m},M_{\sqrt{m}} \Lambda)$ under deformations induced by the harmonic oscillator given by \eqref{eq_Hamilton_m_w0} with $\Omega = 1$. The flow is then determined by the symplectic matrix
\begin{equation}\label{eq_flow_w0}
	S_{\tau,m} = \left(
		\begin{array}{rr}
		\cos \tau & \frac{1}{m} \sin \tau\\
		-m \sin \tau & \cos \tau
		\end{array}
	\right).
\end{equation}
The corresponding metaplectic operator $\metaop[\tau,m]$ is determined by
\begin{equation}\label{eq_flow_w0_metaplectic}
	\metaop[\tau,m] \, g_{m} (t) = i^{n(\tau)-\frac{1}{2}} \sqrt{\frac{m}{|\sin \tau|}} \int_{\R} e^{2 \pi i \, W_{\tau,m}(t,t')} g_{m}(t') \, dt',
\end{equation}
where $n(\tau) \in \lbrace 0,1,2,3 \rbrace$ depends on the parameter $\tau$ as well as on the choice of $arg\left(\sqrt{\sin \tau}\right)$ and where
\begin{equation}
	W_{\tau,m}(t,t') = \frac{m}{2 \sin \tau} \left((t^2+t'^2)\cos \tau -2tt')\right)
\end{equation}
is the generating function of $S_{\tau, m}$. The verification of the upcoming formulas \eqref{eq_ellipse_invariance_window} and \eqref{eq_ellipse_invariance_ambiguity} can be found in appendix \ref{proof_ellipse_invariance}. We have
\begin{equation}\label{eq_ellipse_invariance_window}
	\metaop[\tau,m] \, g_{m} (t) = c \, g_{m}(t),
\end{equation}
with $|c|=1$. Hence, equation \eqref{eq_ellipse_invariance_window} implies that
\begin{equation}\label{eq_ellipse_invariance}
	\G(g_{m},\Lambda) \cong \G(g_{m}, S_{\tau, m} \Lambda),
\end{equation}
where $S_{\tau, m}$ is defined as in \eqref{eq_flow_w0}, as well as
\begin{equation}\label{eq_ellipse_invariance_ambiguity}
	Ag_{m}(x,\omega) = A \left(\metaop[\tau,m] \, g_{m} \right)(x,\omega),
\end{equation}
where $\metaop[\tau,m]$ is defined as given by equation \eqref{eq_flow_w0_metaplectic}. Summing up the results we get the following theorem.

\begin{theorem}\label{ellipse_invariance}
	Let $g_{m}(t) = c \, (2m)^{1/4} e^{-\pi m t^2}$ with $|c| = 1$ and let $\Lambda \subset \R^2$ be a lattice with $vol(\Lambda) < 1$
Let
	\begin{equation}
		S_{\tau,m} = \left(
			\begin{array}{rr}
			\cos \tau & \frac{1}{m} \sin \tau\\
			-m \sin \tau & \cos \tau
			\end{array}
		\right).
	\end{equation}
	be the deformation matrix acting on the lattice, then
	\begin{equation}\label{eq_ellipse_invariance_window_thm}
		\begin{aligned}
			\metaop[\tau,m] \, g_{m} (t) & = i^{n(\tau, m)-\frac{1}{2}} \sqrt{\frac{m}{|\sin \tau|}} \int_{\R} e^{2 \pi i \, W_{\tau,m}(t,t')} g_{m}(t') \, dt'\\
			& = c \, g_{m}(t),
		\end{aligned}
	\end{equation}
	with $|c|=1$ hence,
	\begin{equation}
		\G(g_{m},\Lambda) \cong \G(g_{m}, S_{\tau, m} \Lambda).
	\end{equation}
	Furthermore, for the ambiguity function $A\left(\metaop[\tau,m] \, g_{m}\right)$ we have
	\begin{equation}\label{eq_ellipse_invariance_ambiguity_thm}
		A \left( \metaop[\tau,m] \, g_{m} \right) (x,\omega) = Ag_{m}(x,\omega).
	\end{equation}
\end{theorem}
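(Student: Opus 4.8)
The plan is to reduce the entire statement to the single reproduction identity \eqref{eq_ellipse_invariance_window_thm}, that is, to show that $g_m$ is an eigenfunction of the metaplectic operator $\metaop[\tau,m]$ with an eigenvalue of modulus one; everything else then follows formally. Since $\pi^{Mp}(\metaop[\tau,m]) = S_{\tau,m}$ and $\G(g_m,\Lambda)$ is a Gabor frame (the volume condition $vol(\Lambda)<1$ together with the dilation $\G(g_m,\Lambda)\cong\G(g_1,M_{1/\sqrt m}\Lambda)$ from \eqref{window_gw0}, which preserves the volume as $M_L$ is symplectic, places us in the Gaussian frame-set regime of \cite{Lyu92,Sei92}), Theorem \ref{FrameBounds} gives $\G(\metaop[\tau,m]g_m,S_{\tau,m}\Lambda)\cong\G(g_m,\Lambda)$. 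Writing $\metaop[\tau,m]g_m=c\,g_m$ with $|c|=1$ and using that a unimodular phase leaves the frame bounds unchanged (the Remark following Theorem \ref{FrameBounds}), we may drop $c$ and conclude $\G(g_m,\Lambda)\cong\G(g_m,S_{\tau,m}\Lambda)$. The ambiguity identity \eqref{eq_ellipse_invariance_ambiguity_thm} is even cheaper: the ambiguity functional is sesquilinear, so $A(\metaop[\tau,m]g_m)=A(c\,g_m)=|c|^2\,Ag_m=Ag_m$.

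It therefore remains to establish \eqref{eq_ellipse_invariance_window_thm}, which I would do by a direct evaluation of the complex Gaussian integral in \eqref{eq_flow_w0_metaplectic}, mirroring the computation for the standard Gaussian in Section \ref{sec_RotationInvariance} but carrying the parameter $m$ throughout. Substituting $g_m(t')=c\,(2m)^{1/4}e^{-\pi m t'^2}$ and collecting the exponent $2\pi i\,W_{\tau,m}(t,t')-\pi m t'^2$ as a quadratic polynomial in the integration variable $t'$, one reads off the coefficient of $t'^2$ as $-\pi m(1-i\cot\tau)$, whose real part $-\pi m$ is strictly negative; hence the integral converges and is evaluated by $\int_\R e^{-a t'^2+bt'}\,dt'=\sqrt{\pi/a}\,e^{b^2/(4a)}$, with $a=\pi m(1-i\cot\tau)$ and $b=-2\pi i m t/\sin\tau$, the branch of the root being pinned down by the Maslov prescription \eqref{eq_Maslov_index}.

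The heart of the matter is a cancellation in the $t$-dependent part of the resulting exponent. The term quadratic in $t$ coming directly from $W_{\tau,m}$ is $\pi i m\cot\tau\,t^2$, while $b^2/(4a)$ simplifies to $-\pi m t^2-i\pi m\cot\tau\,t^2$; the two imaginary ``chirp'' contributions $\pm i\pi m\cot\tau\,t^2$ cancel and leave precisely $-\pi m t^2$. This is exactly why the Gaussian profile is reproduced and is the reason $g_m$ is an eigenfunction at all. It then remains to check that the scalar prefactor has modulus one: with $|1-i\cot\tau|=|\csc\tau|=1/|\sin\tau|$ and $|i^{\,n(\tau,m)-1/2}|=1$, the modulus of $i^{\,n(\tau,m)-1/2}\sqrt{m/|\sin\tau|}\,\sqrt{\pi/a}$ collapses to $\sqrt{m/|\sin\tau|}\cdot\sqrt{|\sin\tau|/m}=1$, so $\metaop[\tau,m]g_m=c\,g_m$ with $|c|=1$, which is \eqref{eq_ellipse_invariance_window_thm}.

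The only genuinely delicate point, and the one I expect to be the main obstacle, is the bookkeeping of the complex square root: choosing the integer $n(\tau,m)$ and the argument of $\sqrt{\sin\tau}$ consistently so that the output stays single-valued as $\tau$ crosses the sign changes of $\sin\tau$. For the present theorem this is harmless, since only $|c|=1$ enters the frame-bound and ambiguity-function conclusions and the precise phase can be absorbed into $c$; the convergence of the integral and the symmetry of the quadratic forms attached to $S_{\tau,m}$ (so that Proposition \ref{proposition_quad_form_symplectic_matrix_equivalence} legitimately produces $W_{\tau,m}$) are routine. The detailed verification is deferred to the appendix.
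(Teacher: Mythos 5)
Your proposal is correct, and the computation checks out: with $a=\pi m(1-i\cot\tau)$ and $b=-2\pi i m t/\sin\tau$ one indeed finds $b^2/(4a)=-\pi m t^2-i\pi m\cot\tau\,t^2$, so the chirp $+i\pi m\cot\tau\,t^2$ contributed directly by $W_{\tau,m}$ cancels, and the prefactor has modulus $\sqrt{m/|\sin\tau|}\cdot\sqrt{|\sin\tau|/m}=1$. However, your route is genuinely different from the paper's. The proof in Appendix \ref{proof_ellipse_invariance} never evaluates a Gaussian integral: it establishes the ambiguity identity \eqref{eq_ellipse_invariance_ambiguity_thm} first, using the symplectic covariance $A\left(\metaop[]f\right)(\lambda)=Af\left(S^{-1}\lambda\right)$ from \cite{Fol89}, the intertwining relation $S_{\tau,m}M_{\sqrt{m}}=M_{\sqrt{m}}S_{\tau}$, and the rotational invariance of $Ag_1(\lambda)=e^{-\frac{\pi}{2}\langle\lambda,\lambda\rangle}$, and then deduces the eigenfunction equation \eqref{eq_ellipse_invariance_window_thm} from the fact that the ambiguity function determines a function up to a unimodular factor \cite{Gro01}. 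You argue in the opposite direction: the eigenfunction property by direct evaluation of the integral in \eqref{eq_flow_w0_metaplectic}, and then $A(c\,g_m)=|c|^2Ag_m=Ag_m$, which is the elementary implication (the paper leans on the deeper uniqueness statement). What each buys: the paper's argument requires no branch-of-root or Maslov-index bookkeeping and is uniform in $\tau$, covering even $\tau\in\pi\Z$, where $B=\frac{1}{m}\sin\tau$ fails to be invertible and the kernel representation degenerates — in your approach that case needs separate (though trivial, since $S_{\tau,m}=\pm I$) treatment, but note that the displayed formula \eqref{eq_ellipse_invariance_window_thm} presupposes $\sin\tau\neq 0$ anyway. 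Your computation, on the other hand, is more self-contained, makes the chirp cancellation that forces $g_m$ to be an eigenfunction completely explicit, and your remark that the frame property of $\G(g_m,\Lambda)$ for $vol(\Lambda)<1$ follows from \cite{Lyu92}, \cite{Sei92} via the volume-preserving dilation $\G(g_m,\Lambda)\cong\G(g_1,M_{1/\sqrt{m}}\Lambda)$ makes precise a point the paper leaves implicit. The concluding step — Theorem \ref{FrameBounds} combined with the observation that a unimodular phase leaves the frame bounds in \eqref{Frame} unchanged — is identical in both arguments.
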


Setting $m = 1$, Theorem \ref{ellipse_invariance} implies that the Gabor frame bounds of a Gabor frame with standard Gaussian window stay invariant under a rotation of the lattice and hence, theorem \ref{ellipse_invariance} is a generalization of a result given in \cite{Gos14}.

Also, in Theorem \ref{ellipse_invariance} the symplectic geometry of the lattice remains unchanged, whereas the Euclidean geometry of the lattice will change in general. In the case where $m = 1$, meaning that we only rotate the lattice, the symplectic as well as the Euclidean geometric properties are kept.

In order to derive Theorem \ref{ellipse_invariance} we used a very geometric approach and a clear picture in mind about the flow induced by the harmonic oscillator. The crucial ingredient for theorem \ref{ellipse_invariance} to work is that we could easily and explicitly calculate the eigenfunctions of the metaplectic operator involved. We note that similar approaches have already been made in \cite{Dau88}, characterizing the (dilated) Hermite functions as eigenfunctions of certain localization operators. The geometric approach to phase space has also been used in \cite{DoeRom14} to construct frames consisting of eigenfunctions of localization operators. We only stated theorem \ref{ellipse_invariance} for the dilated Gaussian window, but the result holds for all dilated Hermite functions since they are eigenfunctions of the quadratic Fourier transform and have eigenvalues of modulus 1.

Let $H^T H = \frac{2}{\sqrt{3}}
\left(
	\begin{array}{rr}
	1 & \pm \frac{1}{2}\\
	\pm \frac{1}{2} & 1
	\end{array}
\right)$, such that $H$ generates some version of a rotated hexagonal lattice of volume $1$. Taking the standard Gaussian $g_1$ as window function, any rotated version of the hexagonal lattice gives the same frame bounds, hence, we may choose one representative among all versions of the rotated hexagonal lattice and denote its generator matrix by $H_0$. Let $\delta > 1$ be the density of the lattice such that $\G(g_1, \frac{1}{\sqrt{\delta}} H_0 \Z^2)$ is a Gabor frame. Then, using theorem \ref{ellipse_invariance}, we gain a result closely related to the problem of optimal pulse shape design for LOFDM \cite{StrBea03}. The matrix $S = 
\left(
	\begin{array}{cc}
	\sqrt{2} & \frac{1}{\sqrt{2}}\\
	0 & \frac{1}{\sqrt{2}}
	\end{array}
\right)$ generates a lattice which is a $45$ degrees rotated version of the integer lattice. If we want
\begin{equation}
	\G\left(g_1,\frac{1}{\sqrt{\delta}} H_0 \Z^2\right) \cong \G\left(g,\frac{1}{\sqrt{\delta}} S \Z^2\right)
\end{equation}
for some $g \in \Lt[]$, then
\begin{equation}
	Ag(x,\omega) = e^{-\frac{\pi}{2} \left(\sqrt{3} x^2 + \frac{\omega^2}{\sqrt{3}}\right)}.
\end{equation}
So, the quadratic form in the exponent of the ambiguity function describes an ellipse in standard position with semi-axis ratio $\sqrt{3}$. Furthermore, we know that there are uncountably many other lattice arrangements which together with $g$ lead to the same frame bounds, namely
\begin{equation} 
	\G\left(g,\frac{1}{\sqrt{\delta}} S \Z^2\right) \cong \G\left(g,\frac{1}{\sqrt{\delta}}S_{\tau, m}  S \Z^2\right).
\end{equation}
This result carries over to and extends the results given in \cite{StrBea03}. We will also discuss this result in Example \ref{ex_hex_sqrt3}.

\subsection{Modular Deformations of Gabor Frames}
In this section we will deal with discrete deformations of Gabor frames. In particular, the objects of interest are taken from the modular group which we define as follows.
\begin{definition}\label{definition_modular_group}
	The \textit{modular group} $Sp(2,\Z)$ consists of all 2-dimensional symplectic matrices with integer entries.
\end{definition}
\begin{remark}
	Usually the modular group $\Gamma$ is defined as the group of linear fractional transformations on the upper half of the complex plane which have the form
	\begin{equation}
		z \mapsto \frac{az+b}{cz+d},
	\end{equation}
	with $a,b,c,d \in \Z$ and $ad-bc = 1$. For more details on the modular group see \cite{SteSha03}.
\end{remark}
Consider the integer lattice $\Z^2$. The action of the modular group leaves $\Z^2$ invariant, i.e.\ $\B\Z^2 = \Z^2$ for $\B \in Sp(2,\Z)$. In other words, $\B$ is just another choice for a basis of $\Z^2$. In particular, any $\B \in Sp(2,\Z)$ provides a basis for $\Z^2$. Taking any symplectic matrix $S \in Sp(2,\R)$ and any basis $\B \in Sp(2,\Z)$ for $\Z^2$ this implies that
\begin{equation}\label{eq_lattice_basis}
	S \Z^2 = S \B \Z^2.
\end{equation}
We stay with the integer lattice for the beginning. Let $\G(g,\frac{1}{\sqrt{\delta}} \Z^2)$, $\delta > 1$ be a Gabor frame and let
\begin{equation}
	\B = \left(
		\begin{array}{cc}
			a & b\\
			c & d
		\end{array}
	\right) \in Sp(2,\Z).
\end{equation}
The corresponding metaplectic operator is given by
\begin{equation}
	\widehat{\B}g(t) = i^{-\frac{1}{2}} \sqrt{\frac{1}{|b|}} \int_\R e^{2 \pi i W(t,t')} g(t') \, dt',
\end{equation}
where $W(t,t') = \frac{1}{2}\frac{d}{b}t^2 -\frac{1}{b}tt' + \frac{1}{2}\frac{a}{b}t'^2$ and $b \neq 0$. In general $\widehat{B}g$ will differ from $g$ by more than just a phase factor as we apply a chirp a modified Fourier transform a dilation and again a chirp, but the lattice $\Lambda_{I,\delta} = \frac{1}{\sqrt{\delta}} \Z^2$ remains invariant under a modular deformation. Hence,
\begin{equation}
	\G(g,\Lambda_{I,\delta}) \cong \G(\widehat{\B}g,\Lambda_{I,\delta}).
\end{equation}
This result can be extended in an obvious way. Let $S \in Sp(2,\R)$ and let $\metaop[] \in Mp(2,\R)$ be the corresponding metaplectic operator, then,
\begin{equation}
	\G(\metaop[] g,S \Lambda_{I,\delta}) \cong \G(\metaop[] \widehat{\B}g, S \Lambda_{I,\delta}).
\end{equation}
Therefore, given any lattice $\Lambda = S \Lambda_{I,\delta}$ there are countably many possible windows which lead to the same Gabor frame bounds. We sum up the results in the following theorem.
\begin{theorem}
	Let $S \in Sp(2,\R)$, $\B \in Sp(2,\Z)$ and let $\metaop[]$ and $\widehat{\B}$ be the corresponding metaplectic operators. Let $\Lambda_{I,\delta} = \frac{1}{\sqrt{\delta}} \Z^2$ with $\delta > 1$ and let $g$ be a window function. Then
	\begin{equation}
		\G(\metaop[] g,S \Lambda_{I,\delta}) \cong \G(\metaop[] \widehat{\B}g, S \Lambda_{I,\delta}).
	\end{equation}
\end{theorem}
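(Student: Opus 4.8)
The plan is to deduce this directly from Theorem \ref{FrameBounds} by applying it twice to one and the same ``base'' frame, exploiting that the modular group fixes the scaled integer lattice. Concretely, I would show that both Gabor systems appearing in the statement are metaplectic--symplectic images of the single frame $\G(g, \Lambda_{I,\delta})$; since every such image preserves the optimal frame bounds, the two images inherit the bounds of $\G(g, \Lambda_{I,\delta})$ and are therefore $\cong$ to each other. Throughout I take $\G(g, \Lambda_{I,\delta})$ to be a Gabor frame (this is the implicit hypothesis that makes the relation $\cong$ meaningful, and it is equivalent to either displayed system being a frame, since metaplectic operators are unitary and invertible).

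First I would record the lattice invariance. By Definition \ref{definition_modular_group} every $\B \in Sp(2,\Z)$ has integer entries and satisfies $\B \Z^2 = \Z^2$, so scaling by $\tfrac{1}{\sqrt{\delta}}$ gives $\B \Lambda_{I,\delta} = \Lambda_{I,\delta}$; this is exactly the special case of \eqref{eq_lattice_basis} with the lattice $\Lambda_{I,\delta}$. Next I would use that the natural projection is a homomorphism (Theorem \ref{thm_natural_projection}): since $\pi^{Mp}(\metaop[]) = S$ and $\pi^{Mp}(\widehat{\B}) = \B$, the composite $\metaop[] \widehat{\B}$ is again a single metaplectic operator, with $\pi^{Mp}(\metaop[] \widehat{\B}) = S\B$.

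With these two observations the conclusion is immediate. Applying Theorem \ref{FrameBounds} to $\G(g, \Lambda_{I,\delta})$ with the operator $\metaop[]$ (projecting to $S$) shows that $\G(\metaop[] g, S\Lambda_{I,\delta})$ is a Gabor frame with the optimal bounds $A, B$ of $\G(g, \Lambda_{I,\delta})$. Applying Theorem \ref{FrameBounds} to the same base frame $\G(g, \Lambda_{I,\delta})$ with the composite operator $\metaop[] \widehat{\B}$ (projecting to $S\B$) shows that $\G(\metaop[] \widehat{\B}g, S\B\Lambda_{I,\delta})$ is a Gabor frame with those same bounds $A, B$; invoking $\B \Lambda_{I,\delta} = \Lambda_{I,\delta}$ rewrites its lattice as $S\Lambda_{I,\delta}$. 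Hence both displayed systems are frames over the lattice $S\Lambda_{I,\delta}$ sharing the optimal bounds $A, B$, which is precisely $\G(\metaop[] g, S\Lambda_{I,\delta}) \cong \G(\metaop[] \widehat{\B}g, S\Lambda_{I,\delta})$.

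I do not expect a genuine obstacle: once Theorem \ref{FrameBounds} and the modular invariance $\B\Lambda_{I,\delta} = \Lambda_{I,\delta}$ are in place, the statement is essentially a bookkeeping corollary. The only points requiring a little care are, first, checking that $\metaop[] \widehat{\B}$ really is a single metaplectic operator with projection $S\B$ (which is where Theorem \ref{thm_natural_projection} is needed, rather than treating $\metaop[]$ and $\widehat{\B}$ as an unanalyzed pair), and second, making explicit the implicit assumption that $\G(g,\Lambda_{I,\delta})$ is a frame, so that Theorem \ref{FrameBounds} can legitimately serve as the common starting point. An equivalent two-step route --- first $\G(g, \Lambda_{I,\delta}) \cong \G(\widehat{\B}g, \Lambda_{I,\delta})$ via $\widehat{\B}$, then applying $\metaop[]$ to both sides --- yields the same conclusion and may read more transparently.
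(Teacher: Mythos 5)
Your proposal is correct and matches the paper's argument: the paper likewise combines Theorem \ref{FrameBounds} with the modular invariance $\B \Lambda_{I,\delta} = \Lambda_{I,\delta}$, first obtaining $\G(g,\Lambda_{I,\delta}) \cong \G(\widehat{\B}g,\Lambda_{I,\delta})$ and then applying $\metaop[]$ to both sides --- exactly the ``two-step route'' you mention at the end. Your packaging of the two steps into a single application of Theorem \ref{FrameBounds} with the composite operator $\metaop[]\widehat{\B}$ (justified via Theorem \ref{thm_natural_projection}) is only a cosmetic variation, and your explicit flagging of the implicit hypothesis that $\G(g,\Lambda_{I,\delta})$ is a frame is a point the paper leaves tacit.
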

\begin{remark}
	Whereas the deformations in the previous section have been derived from a continuous, compact group, the deformations in this section are derived from a discrete, non-compact group. Continuous deformation groups will in general change the lattice, whereas the window might stay invariant under the corresponding deformation. Discrete deformation groups will in general change the window, whereas the lattice might stay invariant under the corresponding deformation.
\end{remark}

\subsection{Examples for Generalized Gaussians}

We will now illustrate our intuitive geometric approach by example. We will use different generalized Gaussians and different lattices.
\begin{example}\label{ex_hex_sqrt3}
	We start with an example inspired by \cite{StrBea03}. Let 
	\begin{equation}
		\Lambda_H = \frac{1}{\sqrt{\delta}} \sqrt{\frac{2}{\sqrt{3}}}
		\left(
			\begin{array}{rc}
				\cos(\pi/6) & \cos(\pi/6)\\
				-\sin(\pi/6) & \sin(\pi/6)
			\end{array}
		\right)
	\end{equation}
	be a version of the hexagonal lattice of density $\delta > 1$. Since, $\Lambda_H$ is radial symmetric, we choose the standard Gaussian $g_1$ as window function and have the Gabor system $\mathcal{G}(g_1,\Lambda_H)$ which is a Gabor frame. We apply the dilation matrix $M_{3^{-1/4}}$ on the lattice and the rescaling operator $\rescaleop[3^{-1/4}]$ on the window. By theorem \ref{ellipse_invariance} we know that
	\begin{equation}
		\mathcal{G}(g_1,\Lambda_H) \cong \mathcal{G}(\rescaleop[3^{-1/4}]g_1, M_{3^{-1/4}} \Lambda_H).
	\end{equation}
	Furthermore, we compute
	\begin{equation}
		M_{3^{-1/4}} \Lambda_H = \frac{1}{\sqrt{\delta}} \RotMat[(\pi/4)] \Z^2 = \frac{1}{\sqrt{\delta}} S_{\frac{\pi}{4}} \Z^2,
	\end{equation}
	which is a $45$ degrees rotated version of the integer lattice scaled to have density $\delta > 1$.
	\begin{figure}[ht]
		\centering
		\subfigure[$\tau=\pi/12$]{\includegraphics[width=.45\textwidth]{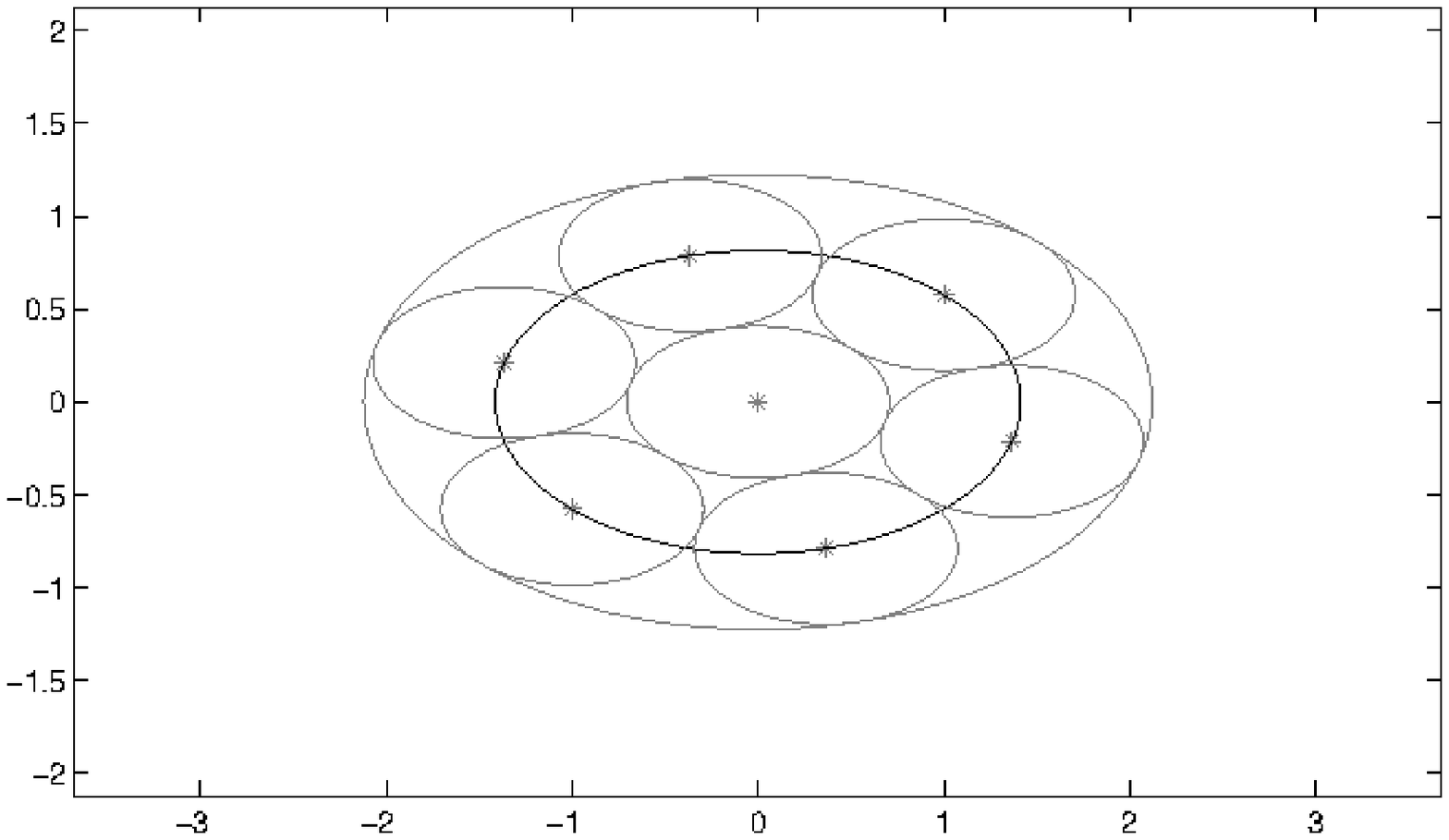}}
		\hfill
		\subfigure[$\tau=0$]{\includegraphics[width=.45\textwidth]{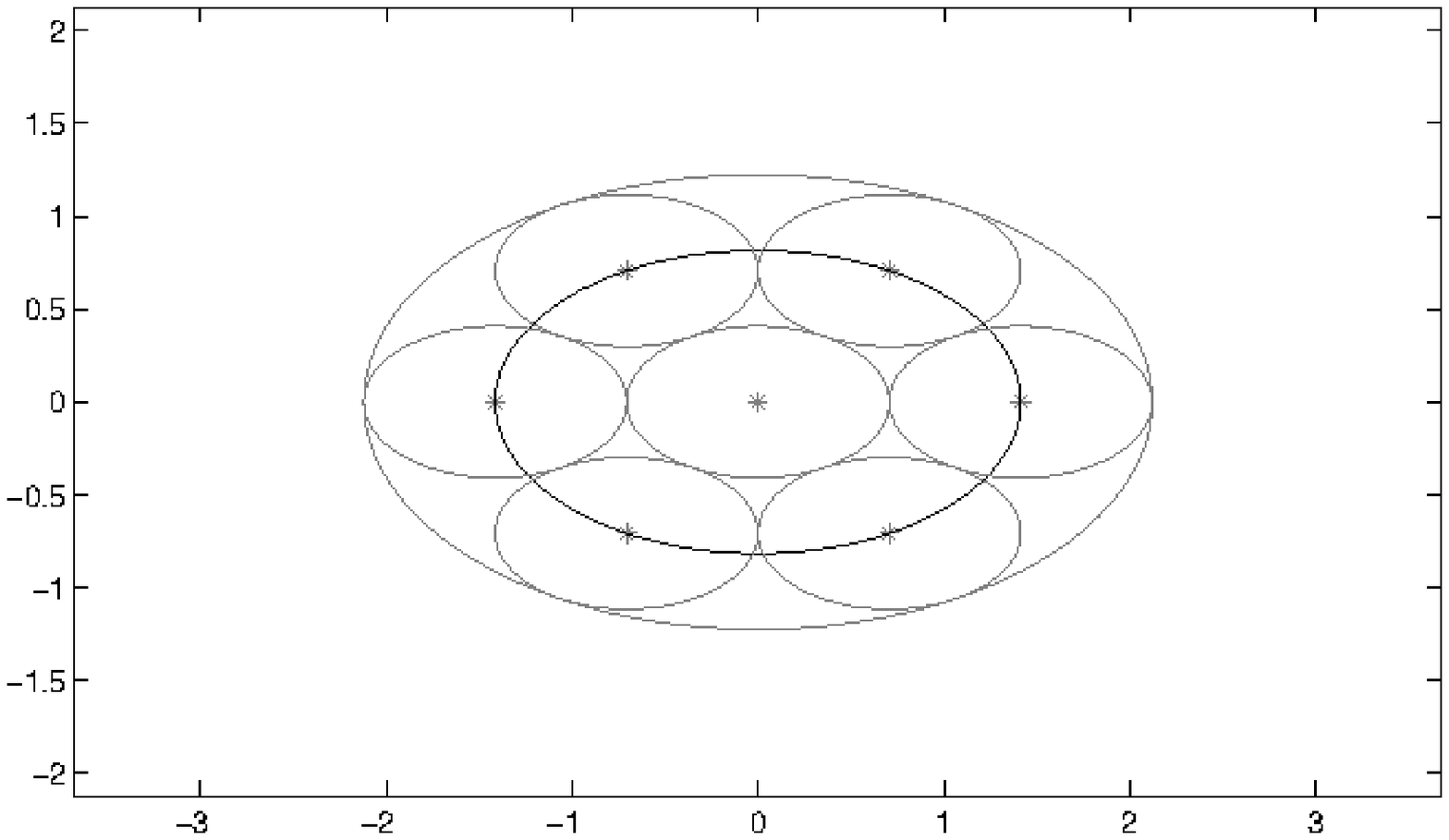}}\\
		\subfigure[$\tau=-\pi/12$]{\includegraphics[width=.45\textwidth]{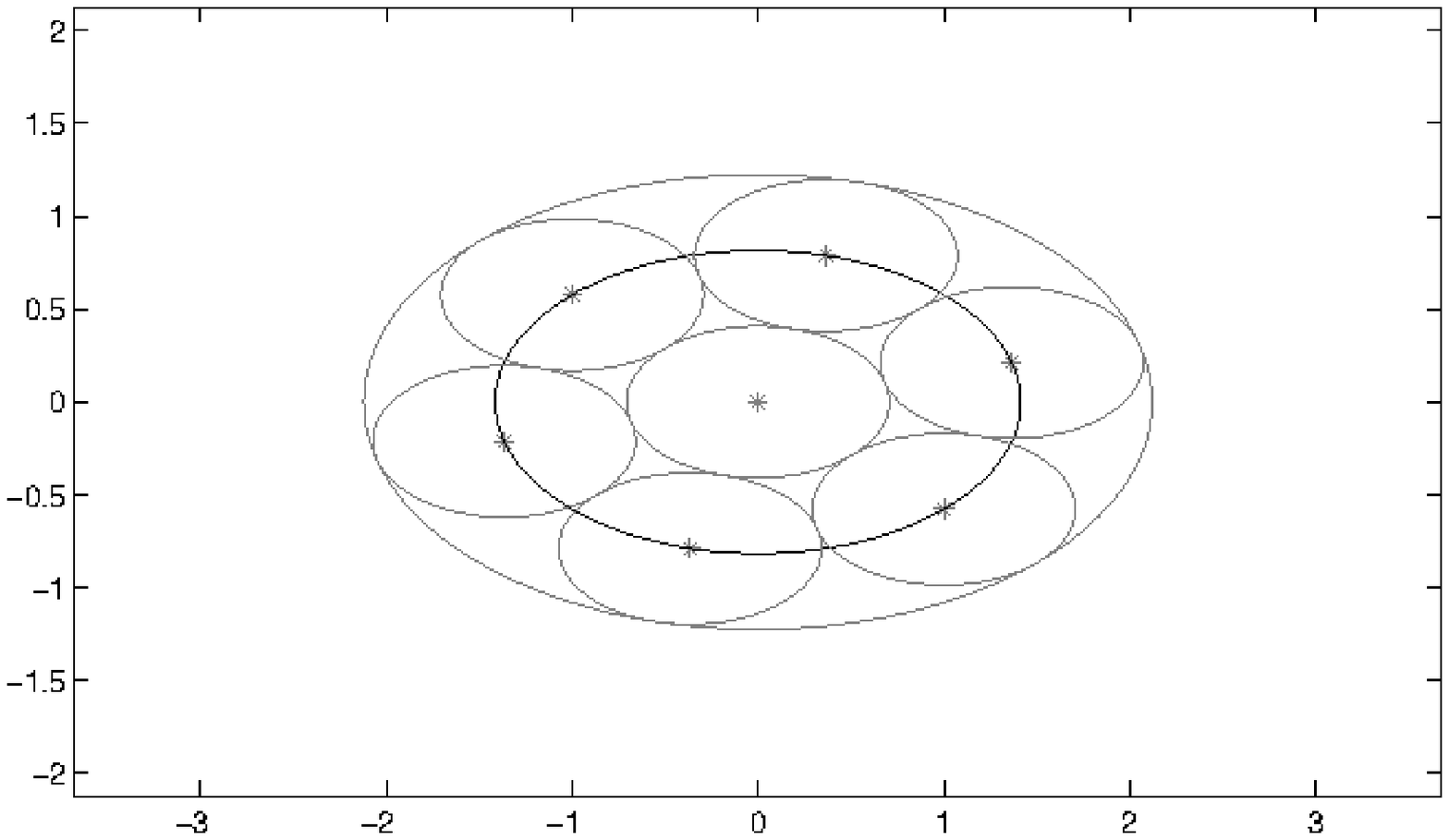}}
		\hfill
		\subfigure[$\tau=-\pi/6$]{\includegraphics[width=.45\textwidth]{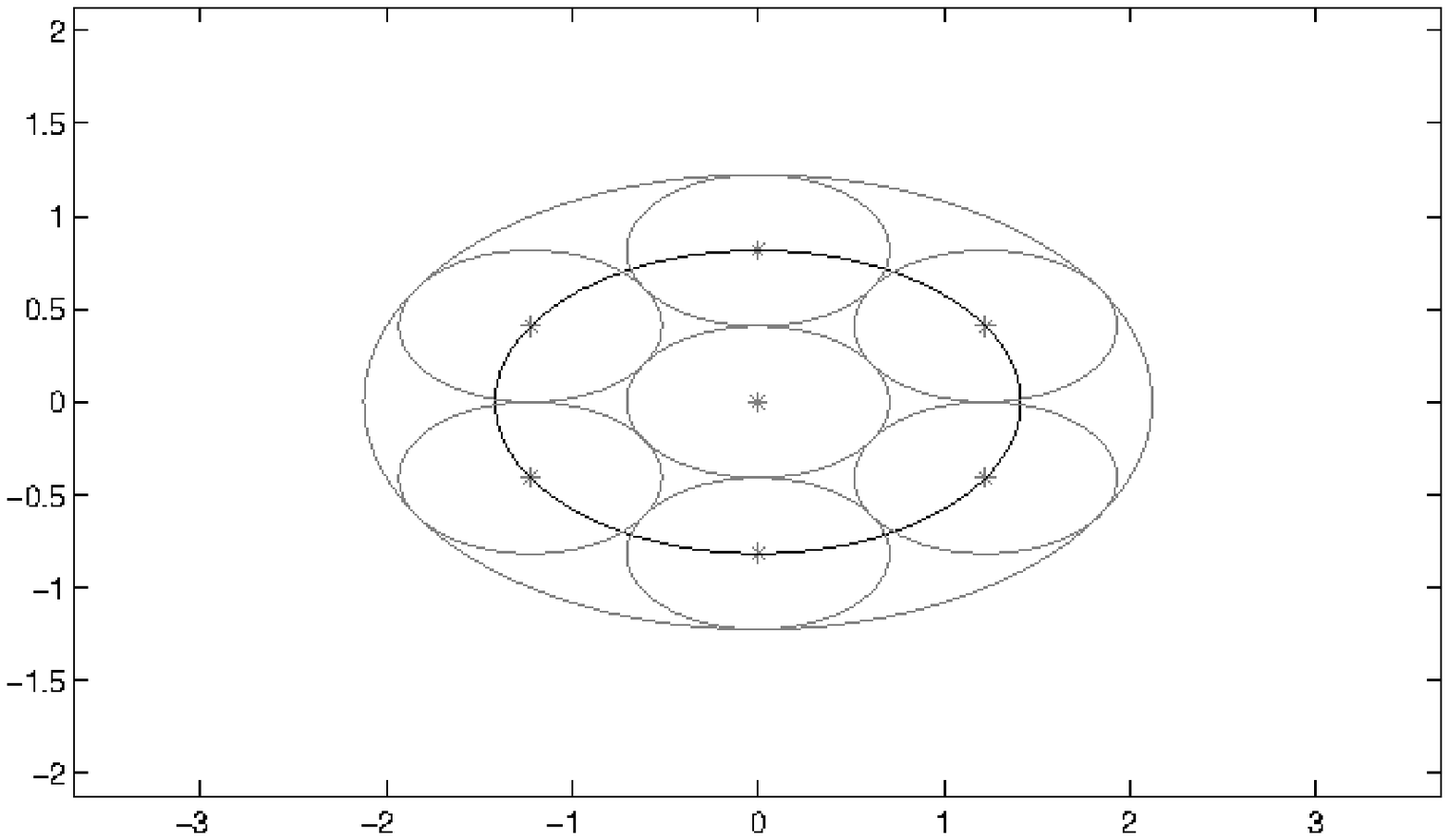}}
		\caption{Illustration of the action of $S_{\tau,m}$ on the lattice and of $\metaop[\tau,m]$ on the ambiguity function. The small ellipses illustrate the ambiguity functions centered at lattice points. The ellipses centered at the origin indicate flow lines of the harmonic oscillator.}\label{fig_Hamilton_deformation}
	\end{figure}
		
	The ambiguity function of $g_1$ is given by
	\begin{equation}
		Ag_1(x,\omega) = e^{-\frac{\pi}{2}(x^2+\omega^2)}
	\end{equation}
	and the ambiguity function of $\rescaleop[3^{-1/4}]g_1 = g_{\sqrt{3}}$ is given by
	\begin{equation}
		Ag_{\frac{1}{\sqrt{3}}}(x,\omega) = e^{-\frac{\pi}{2}\left(\frac{x^2}{\sqrt{3}} + \sqrt{3} \omega^2\right)}.
	\end{equation}
	Applying the matrix
	\begin{equation}
		S_{\tau,\frac{1}{\sqrt{3}}} =
		\left(
			\begin{array}{rc}
				\cos \tau & \sqrt{3} \sin \tau\\
				-\frac{\sin \tau}{\sqrt{3}} & \cos \tau
			\end{array}
		\right),
	\end{equation}
	derived from the flow of the harmonic oscillator with mass $m=\frac{1}{\sqrt{3}}$ on the lattice will leave the frame bounds unchanged and we have
	\begin{equation}
		\mathcal{G}\left(g_1,\Lambda_H \right) \cong \mathcal{G}\left(g_{\frac{1}{\sqrt{3}}},\frac{1}{\sqrt{d}}S_{\frac{\pi}{4}}\Z^2\right) \cong \mathcal{G}\left(g_{\frac{1}{\sqrt{3}}},\frac{1}{\sqrt{d}}S_{\tau,\frac{1}{\sqrt{3}}} \, S_{\frac{\pi}{4}}\Z^2\right).
	\end{equation}
	The deformation process is illustrated in the Figure \ref{fig_Hamilton_deformation}.
\end{example}
\begin{example}\label{ex_modular_go}
	Let $\Lambda_{I,\delta} = \frac{1}{\sqrt{\delta}} \Z^2$ be the scaled integer lattice of density $\delta > 1$ and let $g_1(t) = \go$ be the standard Gaussian. The standard symplectic form $J$ belongs to the modular group $Sp(2,\Z)$. Hence, $J \Lambda_{I,\delta} = \Lambda_{I,\delta}$ and $\modFT g_1 = c \, g_1$ with $|c| = 1$. In this case neither the change of basis, nor the metaplectic operation have an effect on the Gabor frame. This is due to the fact that we could interpret the change of basis as a rotation of the time-frequency plane. Using the quadratic representation of $g_1$, the ambiguity function, we see that a rotation does not have an effect since it reads $Ag_1(x,\omega) = e^{-\frac{\pi}{2}(x^2+\omega^2)}$.
	
	Let us now consider the case of $g_{\sqrt{3}}$ and $\Lambda = \frac{1}{\sqrt{\delta}} S_{\frac{\pi}{4}} \Z^2$ which is a $90$ degrees rotated version compared to the window in Example \ref{ex_hex_sqrt3} which can also be interpreted as the deformation of the window under a change of basis using $J$ as basis. We know the ambiguity function of $Ag_{\sqrt{3}}(x,\omega) = e^{-\frac{\pi}{2} \left( \sqrt{3} x^2+\frac{\omega^2}{\sqrt{3}} \right)}$. We rotate our lattice by the matrix $S_{-\frac{\pi}{4}}$ and apply the corresponding operator $\metaop[-\frac{\pi}{4}]$ on the window. Hence, the ambiguity function of the new window becomes
	\begin{equation}
		\begin{aligned}
			A \metaop[-\frac{\pi}{4}] g_{\sqrt{3}}(x,\omega) & =  Ag_{\sqrt{3}} \left( S_{-\frac{\pi}{4}}^{-1} \lambda \right)\\
			& =  Ag_{1} \left(M^{-1}_{3^{1/4}} S_{-\frac{\pi}{4}}^{-1} \lambda \right)\\
			& = e^{-\frac{\pi}{2} \langle \left(S_{-\frac{\pi}{4}} M_{3^{1/4}}\right)^{-1} \lambda, \, \left(S_{-\frac{\pi}{4}} M_{3^{1/4}}\right)^{-1} \lambda \rangle}\\
			& = e^{-\frac{\pi}{2} \left(M_{3^{-1/4}} S_{\frac{\pi}{4}} \lambda \right)^T \cdot \left(M_{3^{-1/4}} S_{\frac{\pi}{4}} \lambda \right)}\\
			& = e^{-\frac{\pi}{2} \, \lambda^T S_{\frac{\pi}{4}}^T M_{3^{-1/4}}^T M_{3^{-1/4}} S_{\frac{\pi}{4}} \lambda}\\
			& = e^{-\frac{\pi}{2} \, \lambda^T S_{-\frac{\pi}{4}} M_{3^{-1/2}} S_{\frac{\pi}{4}} \lambda}\\
			& = e^{-\frac{\pi}{2} \frac{2}{\sqrt{3}} \left(x^2 + x \omega + \omega^2 \right)}.
		\end{aligned}
	\end{equation}
	Therefore, the level lines of the ambiguity function will be ellipses rotated by $45$ degrees with semi-axis ratio equal to $\sqrt{3}$ and the lattice will be a scaled version of the integer lattice with density $\delta > 1$. The action of the metaplectic operator can be interpreted in a very natural and geometric way as can be seen by the calculations above.
	
	\begin{figure}[ht]
		\centering
		\subfigure[$A \widehat{S_{-\frac{\pi}{4}}} g_{\sqrt{3}}$]{\includegraphics[width=.45\textwidth]{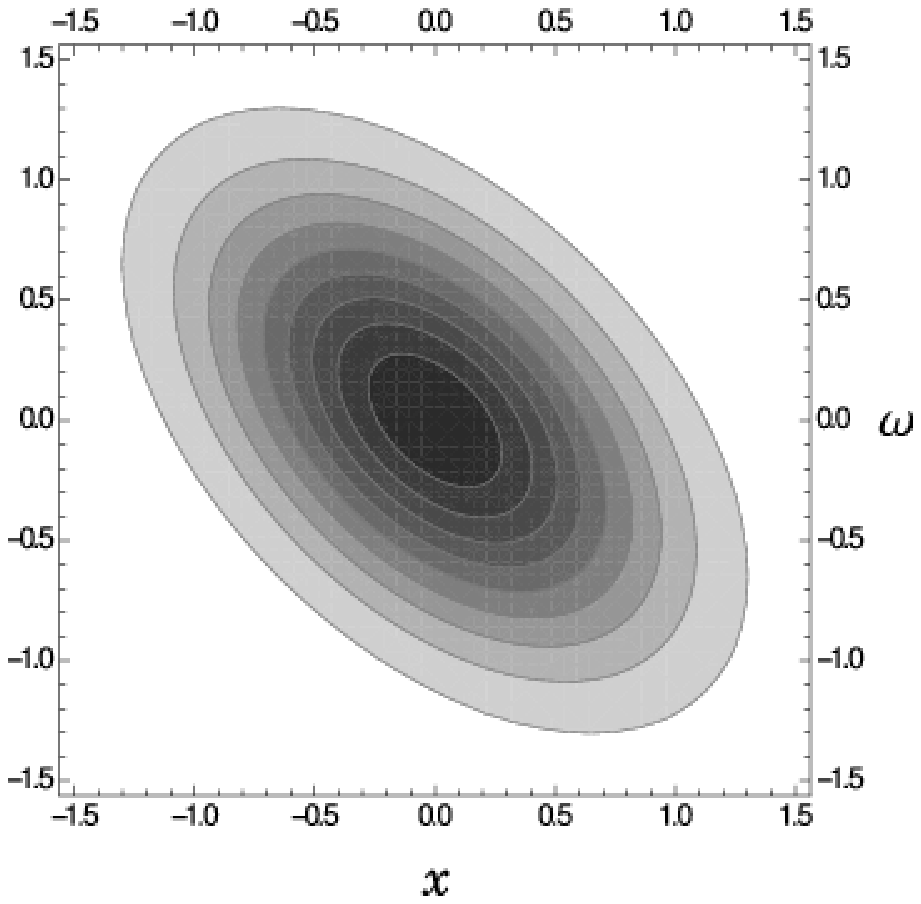}}
		\hfill
		\subfigure[$A \widehat{S_{-\frac{\pi}{4}}} g_{\frac{1}{\sqrt{3}}}$]{\includegraphics[width=.45\textwidth]{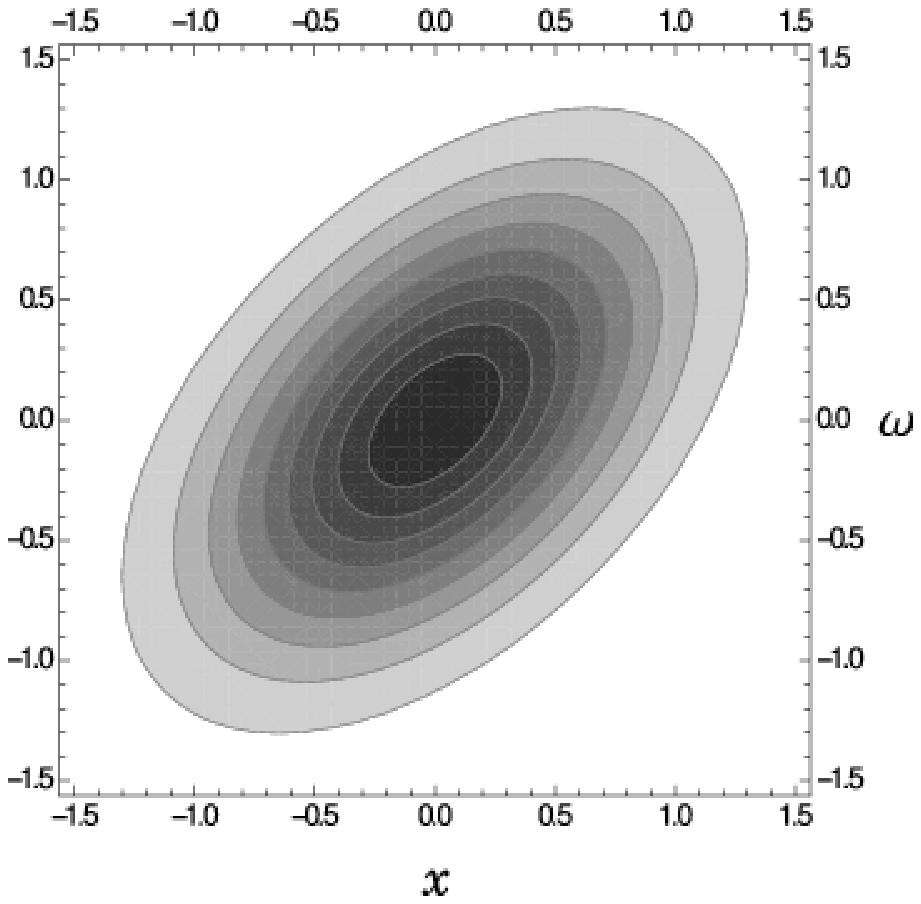}}
		\caption{Contour plots of the ambiguity functions of two possible generalized Gaussians which lead to the same frame bounds for the scaled integer lattice of density $\delta > 1$.}\label{fig_Modular_Deformation}
	\end{figure}
	
	We could also have rotated the lattice $\Lambda = \frac{1}{\sqrt{\delta}} S_{\frac{\pi}{4}} \Z^2$ in the opposite direction by $S_{\frac{\pi}{4}}$. Then, we would have had $\frac{1}{\sqrt{\delta}}J \Z^2$ as our lattice, which leads basically to the same lattice, but with another choice of basis. Hence, the window as well as the ambiguity function would have changed. In this case, we can interpret the deformation as a rotation, a shearing or a choice of a different basis. The interpretation is left to the reader, but the ambiguity function will be
	\begin{equation}
		A \metaop[-\frac{\pi}{4}]^{-1} g_{\sqrt{3}} (x,\omega) = A \metaop[-\frac{\pi}{4}] g_{1/\sqrt{3}} (x, \omega) = e^{-\frac{\pi}{2} \frac{2}{\sqrt{3}} \left(x^2 - x \omega + \omega^2 \right)}.
	\end{equation}
	The calculations are left to the reader, but the geometric understanding of the lattice deformation is sufficient to understand the deformation of the ambiguity function. The level lines describe the same ellipses as before only rotated by $90$ degrees (see Figure \ref{fig_Modular_Deformation}).
\end{example}

\begin{remark}
	Instead of the ambiguity function, which is commonly used in time-frequency analysis, we could as well have used the Wigner distribution (see appendix \ref{App_AmbiguityFunction}). The Hamiltonian formulation of the harmonic oscillator describes the behaviour of a system in classical mechanics which can also be formulated as a problem in  quantum mechanics. The Hermite functions are eigenstates of the propagator which corresponds to the flow of the classical problem. Hence, the deformation of the lattice corresponds to a problem in classical mechanics and the deformation of the window corresponds to the same problem formulated in terms of quantum mechanics. Therefore, using the Wigner distribution, all results can also be interpreted for problems in classical and quantum mechanics.
\end{remark}

\begin{appendices}
\section{The Ambiguity Function}\label{App_AmbiguityFunction}

\begin{definition}\label{AmbiguityFunction}
	The \textit{ambiguity function} of a function $f \in \Lt$ is given by
	\begin{equation}
		Af(x,\omega) = \int_{\R^d} f\left(t+\frac{x}{2}\right) \overline{f\left(t-\frac{x}{2}\right)} e^{-2 \pi i \omega \cdot t} \, dt.
	\end{equation}
	In a similar way we define the \textit{cross-ambiguity function} of two functions $f,g \in \Lt$
	\begin{equation}
		A_g f(x,\omega) = \int_{\R^d} f\left(t+\frac{x}{2}\right) \overline{g\left(t-\frac{x}{2}\right)} e^{-2 \pi i \omega \cdot t} \, dt.
	\end{equation}
\end{definition}
The cross-ambiguity function and hence, just as well the ambiguity function are closely related to the \textit{short-time Fourier transform}
\begin{equation}
	\mathcal{V}_g f(x,\omega) = \langle f, \pi(\lambda) g \rangle = \int_{\R^d} f(t) \overline{g(t-x)} e^{-2 \pi i \omega \cdot t} \, dt.
\end{equation}
In fact, they only differ by a phase factor and we have
\begin{equation}
	A_gf(x, \omega) = e^{\pi i x \cdot \omega} \mathcal{V}_g f(x, \omega).
\end{equation}
The difference in the phase factor is due to the fact that the translation and modulation operators do not commute.
\begin{equation}
	M_\omega T_x = e^{2 \pi i x \cdot \omega} T_x M_\omega,
\end{equation}
therefore, we have
\begin{equation}
	\begin{aligned}
		\mathcal{V}_g f(x,\omega) & = \langle f, M_\omega T_x g \rangle\\
		& = \langle T_{-x/2} M_{-\omega} f, T_{x/2} g \rangle\\
		& = e^{-\pi i x \cdot \omega} \langle M_{-\omega} T_{-x/2} f, T_{x/2} g \rangle\\
		& = e^{-\pi i x \cdot \omega} \langle M_{-\omega/2} T_{-x/2} f, M_{\omega/2} T_{x/2} g \rangle\\
		& = e^{-\pi i x \cdot \omega} \langle \pi(-\lambda/2) f, \pi(\lambda/2) g \rangle\\
		& = e^{-\pi i x \cdot \omega} A_g f(x, \omega).
	\end{aligned}
\end{equation}
The ambiguity function is somehow a more symmetric time-frequency representation of a signal than the short-time Fourier transform. In addition, the ambiguity function of a dilated Gaussian is real-valued as we have seen in \eqref{eq_Ambiguity_generalized_Gaussian}. Furthermore, the ambiguity function already determines a function up to a phase factor \cite{Gro01}. The usual interpretation of the ambiguity function is that it tells how much a function is spread in time and frequency, similar to the interpretation of the \textit{Wigner distribution} in physics. The Wigner distribution is given by
\begin{equation}
	\mathcal{W}f(x,\omega) = \int_{\R^d} f\left(x+\frac{t}{2}\right) \overline{f\left(x-\frac{t}{2}\right)} e^{-2 \pi i \omega \cdot t} \, dt.
\end{equation}
It is related to the ambiguity function by the symplectic Fourier transform
\begin{equation}
	\mathcal{W} f (\lambda) = \mathcal{F} (A f) (J \lambda),
\end{equation}
where $\mathcal{F}f(\omega) = \int_{\R^{d}} f(t) e^{-2 \pi i \omega \cdot t} \, dt$ is the Fourier transform on $\Lt[d]$.

\section{Proof of Theorem \ref{ellipse_invariance}}\label{proof_ellipse_invariance}

We will now prove equations \eqref{eq_ellipse_invariance_window_thm} and \eqref{eq_ellipse_invariance_ambiguity_thm} of theorem \ref{ellipse_invariance}. Since the ambiguity function already determines a function up to a factor of modulus 1, equation \eqref{eq_ellipse_invariance_window_thm} and \eqref{eq_ellipse_invariance_ambiguity_thm} are equivalent. First, we note that
\begin{equation}
	S_{\tau,m} \, M_{\sqrt{m}} =
	\left(
		\begin{array}{cc}
			\frac{\cos \tau}{\sqrt{m}} & \frac{\sin \tau}{\sqrt{m}}\\
			-\sqrt{m} \sin \tau & \sqrt{m} \cos \tau
		\end{array}
	\right) = M_{\sqrt{m}} S_{\tau},
\end{equation}
where $S_{\tau}=S_{\tau,1}$ is a rotation by $-\tau$. This means, that imposing the elliptic flow $S_{\tau,m}$ on the dilated lattice is the same as rotating the lattice by the corresponding angle followed by the same dilation. The second ingredient we need in order to perform the proof is the covariance principle
\begin{equation}
	\pi(\lambda) \metaop[] = \metaop[] \pi\left(S^{-1} \lambda\right),
\end{equation}
where $\pi(\l) = M_\omega T_x$ is the time-frequency shift as defined in \eqref{eq_TFShift}.
This implies that
\begin{equation}
	A \left(\metaop[]f\right)(\lambda) = A f(S^{-1} \lambda).
\end{equation}
This is a classical result \cite{Fol89} which is also used in \cite{StrBea03} and a similar result for the STFT is given in \cite{Gro01}. Hence, using the fact that
\begin{equation}
	Ag_1(\lambda) = Ag_1(x,\omega) = e^{-\frac{\pi}{2} (x^2+\omega^2)} = e^{-\frac{\pi}{2} \langle \lambda, \; \lambda \rangle}
\end{equation}
we compute
\begin{equation}
	\begin{aligned}
		A \left( \metaop[\tau,m] \, g_{m} \right) (x,\omega) & = A \left( \metaop[\tau,m] \rescaleop[\sqrt{m}] \, g_1 \right) (x,\omega)\\
		& = A g_1 \left(\left(S_{\tau,m} M_{\sqrt{m}}\right)^{-1} \lambda\right)\\
		& = e^{-\frac{\pi}{2} \langle \left(S_{\tau,m} \, M_{\sqrt{m}}\right)^{-1} \lambda, \; \left(S_{\tau,m} \, M_{\sqrt{m}}\right)^{-1} \lambda \rangle}\\
		& = e^{-\frac{\pi}{2} \langle \left(M_{\sqrt{m}} \, S_{\tau} \right)^{-1} \lambda, \; \left(M_{\sqrt{m}} \, S_{\tau}\right)^{-1} \lambda \rangle}\\
		& = e^{-\frac{\pi}{2} \langle S_{\tau}^{-1} M_{\sqrt{m}}^{-1} \; \lambda, \; S_{\tau}^{-1} M_{\sqrt{m}}^{-1} \; \lambda \rangle}\\
		& = e^{-\frac{\pi}{2} \langle M_{\sqrt{m}}^{-1} \; \lambda, \; M_{\sqrt{m}}^{-1} \; \lambda \rangle}\\
		& = e^{-\frac{\pi}{2} \left(m x^2 + \frac{\omega^2}{m}\right)}\\
		& = Ag_{m}(x,\omega).
	\end{aligned}
\end{equation}

This proves equation \eqref{eq_ellipse_invariance_ambiguity_thm} and hence, the proof of theorem \ref{ellipse_invariance} is complete.
\end{appendices}

\subsubsection*{Acknowledgements}
	The author wishes to thank Maurice de Gosson and Karlheinz Gr\"ochenig for many fruitful discussions on the topic. The author wishes to thank the reviewers for helpful comments to improve this work. The author was supported by the Austrian Science Fund (FWF): [P26273-N25].

\addcontentsline{toc}{part}{References}
\bibliographystyle{plain}
\bibliography{diss}

\end{document}